\documentclass[11pt]{amsart}

\usepackage[margin=1.25in]{geometry}

\usepackage{graphicx}

\usepackage{amsmath, amssymb, amsthm, graphicx, enumerate, tikz, float, color}
\usepackage{mathtools, comment}
\usepackage{wasysym}
\usepackage[misc]{ifsym}
\usepackage[colorlinks]{hyperref}

\hypersetup{citecolor=blue}
\usetikzlibrary{matrix,arrows,decorations.pathmorphing}

\newtheorem{theorem}{Theorem}[section]
\newtheorem{lemma}[theorem]{Lemma}
\newtheorem{corollary}[theorem]{Corollary}
\newtheorem{proposition}[theorem]{Proposition}
\newtheorem{claim}[theorem]{Claim}
\newtheorem*{thmmain}{Theorem 1.1}
\newtheorem{hyp}[theorem]{Hypothesis}

\theoremstyle{definition}
\newtheorem{definition}[theorem]{Definition}

\theoremstyle{remark}
\newtheorem{remark}[theorem]{Remark}

\DeclareMathOperator{\cd}{cd}
\DeclareMathOperator{\Irr}{Irr}

\numberwithin{equation}{section}

\newcommand{\SL}[2]{\Sigma_{#1,#2}^L} 
\newcommand{\SR}[2]{\Sigma_{#1,#2}^R} 
\newcommand{\SLR}[2]{\Sigma_{#1,#2}^*} 
\newcommand{\SRi}[3]{\Sigma_{#1,#2}^{#3R}} 

\newcommand{\Mark}[1]{\textcolor{blue}{#1}}
\newcommand{\Jake}[1]{\textcolor{red}{#1}}
\newcommand{\Sara}[1]{\textcolor{purple}{#1}}

\makeatletter
\@namedef{subjclassname@2020}{%
  \textup{2020} Mathematics Subject Classification}
\makeatother

\begin{document}

\allowdisplaybreaks

\title[Prime character degree graphs within a family (ii)]{On prime character degree graphs occurring within a family
of graphs (ii)}

\author{Sara DeGroot}
\address{Department of Mathematics, St. Norbert College, De Pere, WI 54115}
\email{sara.degroot@snc.edu}

\author{Jacob Laubacher}
\address{Department of Mathematics, St. Norbert College, De Pere, WI 54115}
\email{jacob.laubacher@snc.edu}

\author{Mark Medwid}
\address{Department of Mathematical Sciences, Rhode Island College, Providence, RI 02908}
\email{mmedwid@ric.edu}

\subjclass[2020]{Primary 20D10; Secondary 20C15, 05C75} 

\date{\today}

\keywords{character degree graphs, solvable groups, normal nonabelian Sylow subgroups\\\indent\emph{Corresponding author.} Mark Medwid \Letter~\href{mailto:mmedwid@ric.edu}{mmedwid@ric.edu} \phone~401-456-9761.}

\begin{abstract}
In this paper, we continue the classification work done in the first paper of the same name. With careful modifications of our previous approach, we are able to deduce (with two notable exceptions) which members of the previously introduced graph family manifest as the prime character degree graph of some solvable group.
\end{abstract}


\maketitle

\section{Introduction}

Throughout this paper, we let $G$ be a finite solvable group. Following convention, we let $\Irr(G)$ denote the set of irreducible characters of $G$, and $\cd(G)=\{\chi(1)~:~\chi\in\Irr(G)\}$. The corresponding prime character degree graph for $G$ is a simple graph denoted $\Delta(G)$. Letting $\rho(\Gamma)$ represent the vertex set of a general graph $\Gamma$, we note that $\rho(\Delta(G))$ consists of all such prime divisors of $\cd(G)$. In this context, the idea of a vertex is synonymous with that of a prime, and as is common, we will abbreviate $\rho(\Delta(G))$ with $\rho(G)$. There is an edge between two distinct vertices $p,q\in\rho(G)$ if there exists some character $a\in\cd(G)$ such that $pq\mid a$. Prime character degree graphs have been studied for some time (for a broad overview, one can see \cite{L3}).

In this paper, we follow \cite{LM}. We recall the construction of the family of graphs from \cite{LM}, where the goal was to classify which graphs do or do not occur as the prime character degree graph of a solvable group. The family of graphs, denoted $\{\SLR{k}{n}\}$, has two variants: $\SL{k}{n}$ and $\SR{k}{n}$. We refer to both simultaneously by $\SLR{k}{n}$, and note that this graph is built by first arbitrarily choosing integers $k$ and $n$ such that $1 \leq n \leq k$, and then following several constructive rules. As such, the graph $\SLR{k}{n}$ consists of two distinct subgraphs $A$ and $B$, a fixed vertex $c$, and satisfies the following:
\begin{enumerate}[(i)]
    \item $A$ is a complete graph on $k$ vertices $a_1, a_2, \ldots, a_k$,
    \item $B$ is a complete graph on $k+n$ vertices $b_1,b_2, \ldots, b_k, \ldots, b_{k+n}$,
    \item $c \notin \rho(A)$ and $c \notin \rho(B)$,
    \item $\rho(A) \cap \rho(B) = \varnothing$,
    \item there is an edge between $a_i$ and $b_i$ for all $1 \leq i \leq k$,
    \item there is an edge between $a_i$ and $b_{k+i}$ for all $1 \leq i \leq n$,
    \item there is an edge between $c$ and $a_i$ for all $1 \leq i \leq k$ in $\SL{k}{n}$,
    \item there is an edge between $c$ and $b_i$ for all $1 \leq i \leq k+n$ in $\SR{k}{n}$,
    \item there are no edges in the graph $\SLR{k}{n}$ other than the edges described in (i)--(viii).
\end{enumerate}

Reading this graph $\SLR{k}{n}$ from left to right, the reason for the notation becomes apparent: $k$ counts the number of vertices on the left, $n$ represents how many distinct one-to-two edge mappings occur from $A$ to $B$, and the superscript $L$ or $R$ tells us where the fixed vertex $c$ resides. The main theorem of \cite{LM} completely classifies the graphs in the family $\{\SL{k}{n}\}$, whereas this paper (a direct sequel) has the goal to classify the family $\{\SR{k}{n}\}$. Our main result for this paper is as follows:

\begin{theorem}\label{Rfamilyresult}
The graph $\SR{k}{n}$ occurs as the prime character degree graph of a solvable group when $(k,n)=(1,1)$ (see Figure \ref{fig11R}), and possibly when $(k,n)\in\{(2,1),(2,2)\}$ (see Figure \ref{figRmaybe}). Otherwise $\SR{k}{n}$ does not occur as the prime character degree graph of any solvable group.
\end{theorem}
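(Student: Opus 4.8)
The plan is to split the theorem into two halves: first exhibit the two (or three) groups that do realize $\SR{k}{n}$, and then prove the non-occurrence statement for all remaining $(k,n)$. For the positive direction, for $(k,n)=(1,1)$ I would simply produce an explicit solvable group whose character degree graph is the five-vertex graph depicted in Figure \ref{fig11R} — most likely a group of the form $(\text{elementary abelian})\rtimes H$ or a direct product built from small affine-type groups — and verify $\cd(G)$ directly; for $(2,1)$ and $(2,2)$ I would only claim possible occurrence, so no construction is needed there (they remain the two ``notable exceptions'' mentioned in the abstract).

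The real content is the non-occurrence half, and here I would mirror the strategy of \cite{LM}. Suppose $G$ is a solvable group of minimal order with $\Delta(G)=\SR{k}{n}$ for some disallowed pair $(k,n)$. The fixed vertex $c$ in $\SR{k}{n}$ is adjacent to every vertex of $B$ but to no vertex of $A$, so $\Delta(G)$ has diameter $3$ and the pair $(\rho(A)\cup\{c\},\,\rho(B)\cup\{c\})$ together with the $A$--$B$ partial matching controls the structure heavily. I would run through the standard toolkit: Pálfy's condition (any three vertices span an edge, which forces $|\rho(A)|$ and $|\rho(B)|$ to not be too large relative to the connectivity), the fact that $\Delta(G)$ has at most two connected components with known structure when disconnected (here it is connected), Pálfy's inequality for the disconnected subgraphs obtained after removing $c$, and the results on graphs with a cut vertex. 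Then I would invoke the classification of normal nonabelian Sylow subgroups and the structure theorems for solvable groups whose character degree graph has a specified number of vertices (e.g. the bounds $|\rho(G)|\le 2\cdot(\text{diameter-related quantity})$, Lewis's work on graphs with few vertices, and the Mark--Lewis-type analysis when $G$ has a normal nonabelian Sylow $p$-subgroup), adapting each lemma from \cite{LM} by swapping the role of $c$ from the $A$-side to the $B$-side.

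Concretely, the key steps in order would be: (1) establish that $G/\mathbf{F}(G)$ and the action of $G$ on the chief factors inside $\mathbf{F}(G)$ are constrained by the matching edges (v)--(vi), forcing certain primes to divide $|\mathbf{F}(G)|$ and others to act Frobenius-like; (2) use the vertex $c$'s complete join to $B$ to pin down a normal subgroup $N$ with $\rho(G/N)$ or $\rho(N)$ a prescribed complete graph, then apply the ``subgraph of a character degree graph is a character degree graph''-type reductions; (3) derive a numerical contradiction from counting vertices and edges once $k\ge 3$, or once $k=2$ with $c$ joined to all of $B$ and $|\rho(B)|=k+n\ge 3$ — the congruence/Pálfy obstructions should rule these out; (4) separately dispatch the small leftover cases $k=1,n\ge 2$ and $k=2,n=1,2$ not already covered, the last two being left open. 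The main obstacle I expect is step (3) in the boundary regime $k=2$: the graphs $\SR{2}{1}$ and $\SR{2}{2}$ are small enough that the generic counting contradictions fail, and neither a construction nor a clean obstruction is available — which is exactly why the theorem states only ``possibly'' for those pairs. A secondary difficulty is that moving $c$ to the $B$-side enlarges the complete part $B$ to $k+n$ vertices, so several inequalities from \cite{LM} that were slack on the $A$-side become tight and must be re-proved rather than quoted.
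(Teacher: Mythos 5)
Your overall frame (minimal counterexample, P\'alfy's condition, admissibility, absence of normal nonabelian Sylow subgroups, then a final structural contradiction) is the right genre, but the proposal is missing the idea that actually makes the paper's argument close, and it contains a few claims that would fail. The central gap: the paper's proof is an induction on $n$, and the inductive step cannot be carried out using only the graphs $\SR{k}{t}$ themselves. To show the vertices $b_i$ of $\SR{k}{t+1}$ are (strongly) admissible one must know that certain \emph{subgraphs} are non-occurring, and those subgraphs are not members of $\{\SR{k}{n}\}$ at all --- they belong to an auxiliary family $\SRi{k}{n}{m}$ with $m$ special vertices joined to $B$. The paper has to establish an entire chain of non-occurrence results, $\SRi{k}{1}{(t+1)} \Rightarrow \SRi{k}{2}{t} \Rightarrow \cdots \Rightarrow \SRi{k}{t}{2}$ (seeded by Theorem \ref{KT} and the inductive hypothesis), before the admissibility checks for $\SR{k}{t+1}$ can even be stated. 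Your plan of ``adapting each lemma from \cite{LM} by swapping the role of $c$'' does not supply these intermediate graphs, and without them the admissibility conditions (i)--(iii) cannot be verified, so the Sylow-subgroup argument never gets off the ground.

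Several concrete assertions in your steps are also wrong or unusable. The graph $\SR{k}{n}$ has diameter $2$, not $3$ ($c$ reaches every $a_i$ through $b_i$); diameter-three results from \cite{Sass} are applied in the paper only to certain \emph{subgraphs} obtained by deleting vertices or edges. There is no ``subgraph of a character degree graph is a character degree graph'' reduction --- the machinery runs in the opposite direction, proving subgraphs are non-occurring to certify admissibility. The final contradiction is not a vertex/edge count: it comes from showing $\Phi(G)=1$, that the Fitting subgroup is minimal normal, and then invoking Lemma \ref{lemma3}; the only place a numerical inequality appears is P\'alfy's inequality from \cite{P2} applied to disconnected subgraphs with the same vertex set (which is also the precise reason $(2,1)$ and $(2,2)$ escape the method, not a failure of ``generic counting''). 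Finally, the case $k=1$, $n\ge 2$ you propose to dispatch separately does not exist, since the family requires $n\le k$.
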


\begin{figure}[htb]
    \centering
$
\begin{tikzpicture}[scale=2]
\node (1a) at (0,.5) {$a_1$};
\node (11a) at (.75,1) {$b_1$};
\node (22a) at (.75,0) {$b_2$};
\node (33a) at (1.25,.5) {$c$};
\path[font=\small,>=angle 90]
(11a) edge node [right] {$ $} (22a)
(11a) edge node [right] {$ $} (33a)
(22a) edge node [right] {$ $} (33a)
(1a) edge node [right] {$ $} (11a)
(1a) edge node [right] {$ $} (22a);
\end{tikzpicture}
$
    \caption{The graph $\SR{1}{1}$}
    \label{fig11R}
\end{figure}

\begin{figure}[htb]
    \centering
$
\begin{tikzpicture}[scale=2]
\node (1b) at (0,1) {$a_1$};
\node (2b) at (0,0) {$a_2$};
\node (11b) at (1.25,1) {$b_1$};
\node (22b) at (1.25,0) {$b_2$};
\node (33b) at (.75,.5) {$b_3$};
\node (44b) at (1.75,.5) {$c$};
\path[font=\small,>=angle 90]
(1b) edge node [right] {$ $} (2b)
(11b) edge node [right] {$ $} (22b)
(11b) edge node [right] {$ $} (33b)
(11b) edge node [right] {$ $} (44b)
(22b) edge node [right] {$ $} (33b)
(22b) edge node [right] {$ $} (44b)
(33b) edge node [right] {$ $} (44b)
(1b) edge node [right] {$ $} (11b)
(2b) edge node [right] {$ $} (22b)
(1b) edge node [right] {$ $} (33b);
\node (1c) at (2.5,1) {$a_1$};
\node (2c) at (2.5,0) {$a_2$};
\node (11c) at (3.75,1) {$b_1$};
\node (22c) at (3.75,0) {$b_2$};
\node (33c) at (3.25,.75) {$b_3$};
\node (44c) at (3.25,.25) {$b_4$};
\node (55c) at (4.25,.5) {$c$};
\path[font=\small,>=angle 90]
(1c) edge node [right] {$ $} (2c)
(11c) edge node [right] {$ $} (22c)
(11c) edge node [right] {$ $} (33c)
(11c) edge node [right] {$ $} (44c)
(11c) edge node [right] {$ $} (55c)
(22c) edge node [right] {$ $} (33c)
(22c) edge node [right] {$ $} (44c)
(22c) edge node [right] {$ $} (55c)
(33c) edge node [right] {$ $} (44c)
(33c) edge node [right] {$ $} (55c)
(44c) edge node [right] {$ $} (55c)
(1c) edge node [right] {$ $} (11c)
(2c) edge node [right] {$ $} (22c)
(1c) edge node [right] {$ $} (33c)
(2c) edge node [right] {$ $} (44c);
\end{tikzpicture}
$
    \caption{The graphs $\SR{2}{1}$ and $\SR{2}{2}$}
    \label{figRmaybe}
\end{figure}

As we saw in \cite{LM}, moving the fixed vertex $c$ from the left to the right causes a noteworthy change and variation between the graphs, substantiating separate strategies and ultimately separate papers.

\section{Preliminaries}

All relevant preliminary information may be found with light detail in \cite{LM}. However, we shall recall some of the results and terms seeing frequent mention throughout this paper. In particular, we touch on the following main tools: P\'{a}lfy's condition, taming of graphs that arise as subgraphs of $\SR{k}{n}$, and the notion of (strongly) admissible vertices, which is tied to the issue of normal nonabelian Sylow $p$-subgroups.

\subsection{P\'{a}lfy's condition and other family classifications}


In order to successfully determine whether a graph in the family $\{\SR{k}{n}\}$ does or does not manifest as the prime character degree graph of some solvable group, we shall rely upon both classical and contemporary results. The following result by P\'{a}lfy is classical, and offers a simple way to ``rule out'' a particular graph from occurring as $\Delta(G)$ for some solvable $G$.

\begin{lemma}[P\'alfy's condition from \cite{P}]\label{PC}
Let $G$ be a solvable group and let $\pi$ be a set of primes contained in $\Delta(G)$. If $|\pi|=3$, then there exists an irreducible character of $G$ with degree divisible by at least two primes from $\pi$. (In other words, any three vertices of the prime character degree graph of a solvable group span at least one edge.)
\end{lemma}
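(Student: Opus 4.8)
The plan is to argue by contradiction through a minimal counterexample, reading the statement as the assertion that $\Delta(G)$ has no independent set of three vertices. So suppose $\pi=\{p,q,r\}\subseteq\rho(G)$ with no two of $p,q,r$ joined by an edge; equivalently, every degree in $\cd(G)$ is divisible by at most one of $p,q,r$. Among all solvable groups admitting such a triple, I would fix $G$ of minimal order and show that it cannot exist.

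First I would run the standard Clifford-theoretic reductions to pin down the structure of $G$. For any $M\trianglelefteq G$ the set $\cd(G/M)$ sits inside $\cd(G)$, so $\Delta(G/M)$ is an induced subgraph of $\Delta(G)$; by minimality no proper quotient can retain all three primes pairwise non-adjacent, so each minimal normal subgroup must ``kill'' at least one of $p,q,r$, which sharply limits how many minimal normal subgroups $G$ can have and how they sit relative to $\pi$. Invoking solvability I would take a minimal normal subgroup $N$, necessarily an elementary abelian $t$-group for some prime $t$, and use $\mathbf{C}_G(\mathbf{F}(G))\subseteq\mathbf{F}(G)$ together with minimality to reduce to the situation where $N$ is self-centralizing and $G/N$ acts faithfully on $N$, which we may take to be a faithful irreducible module. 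Since $N$ is abelian, $\Irr(N)$ is $G/N$-isomorphic to the dual module $\widehat N$, and Clifford theory identifies the degrees of the irreducible characters of $G$ lying over a nonprincipal $\lambda\in\Irr(N)$ with the orbit length $[G:I_G(\lambda)]$ times degrees internal to the inertia factor $I_G(\lambda)/N$.

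The heart of the matter then becomes a statement about the orbit lengths of the linear action of $H:=G/N$ on $\widehat N$: the hypothesis forces each orbit length, and each contributing inertia-factor degree, to be divisible by at most one of $p,q,r$, while all three primes must nonetheless appear in $\cd(G)$ somewhere. The main obstacle, and where the genuine group theory lives, is proving that a solvable linear group cannot have all its orbit lengths avoid every product $pq$, $pr$, and $qr$ while still realizing all three primes; this is exactly the conclusion supplied by the orbit-size and regular-orbit theorems for solvable linear groups (the results of Manz and Wolf on the prime divisors of orbit sizes in coprime and general linear actions). I would deploy those bounds, together with a case split on whether $t\in\{p,q,r\}$ and on the Fitting structure of $H$, to exhibit an orbit whose length is divisible by two of the three primes, hence a character $\chi\in\Irr(G)$ with $\chi(1)$ divisible by two members of $\pi$. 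This contradicts the choice of $\pi$ and finishes the proof, and the parenthetical reformulation about three vertices always spanning an edge follows at once.
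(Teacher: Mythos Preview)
The paper does not prove this lemma at all: it is stated as a citation from P\'alfy's original paper \cite{P} and used throughout as a black box. There is therefore no ``paper's own proof'' to compare against; the authors simply invoke the result.

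Your sketch outlines a plausible strategy---minimal counterexample, reduction to a faithful irreducible module for $G/N$ via $\mathbf{C}_G(\mathbf{F}(G))\subseteq\mathbf{F}(G)$, then Clifford theory translating the question into one about orbit lengths of a solvable linear group on the dual module. That is indeed the shape of P\'alfy's argument. However, your write-up does not actually execute the decisive step: you say only that ``the results of Manz and Wolf on the prime divisors of orbit sizes'' supply the needed contradiction, without naming a specific theorem or explaining why the hypotheses are met. This is precisely where the real content lies, and P\'alfy's proof is more delicate than a single citation---it involves a careful case analysis on the structure of the Fitting subgroup of $G/N$ and a nontrivial arithmetic argument, not merely an off-the-shelf orbit theorem. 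As written, your proposal is a correct high-level roadmap but not a proof; the gap is that the crucial orbit-length step is asserted rather than carried out.
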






We will also briefly recall the classification work done on a family with construction similar to the $\SLR{k}{n}$ graphs. The graph $\Gamma_{k,t}$ (with $k \geq t \geq 1$) consists of two complete graphs $A$ and $B$ on $k$ and $t$ vertices, respectively. Letting $\rho(A) = \{a_1, a_2, \ldots, a_k\}$ and $\rho(B) = \{b_1, b_2, \ldots, b_t\}$, we also have that $a_i$ is adjacent to $b_i$ for any $i$ satisfying $1 \leq i \leq t$, while the remaining $k-t$ vertices in $A$ are adjacent to no vertices in $B$. Some of these graph arise naturally as subgraphs of $\SLR{k}{n}$, so the following result from \cite{BissLaub} will be especially useful in our arguments.

\begin{theorem}\label{KT}\emph{(\cite{BissLaub})}
The graph $\Gamma_{k,t}$ occurs as the prime character degree graph of a solvable group precisely when $t=1$ or $k=t=2$. Otherwise $\Gamma_{k,t}$ does not occur as the prime character degree graph of any solvable group, nor does any connected proper subgraph of $\Gamma_{k,t}$ with the same vertex set whenever $k\geq t\geq2$.
\end{theorem}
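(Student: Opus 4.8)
The plan is to treat the two directions separately and, for the negative direction, to build the spanning-subgraph strengthening into the argument from the start.

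\emph{Realizable cases.} For $k=t=2$ the graph $\Gamma_{2,2}$ is an induced $4$-cycle: its edges are $a_1a_2$, $a_2b_2$, $b_2b_1$, $b_1a_1$, with non-edges $a_1b_2$ and $a_2b_1$. I would exhibit a solvable group realizing $C_4$ (a standard example in the realizability literature) and read the four prime vertices and their adjacencies directly from $\cd(G)$. For $t=1$ the graph $\Gamma_{k,1}$ is the clique $A$ on $\rho(A)=\{a_1,\dots,a_k\}$ together with a single pendant vertex $b_1$ joined only to $a_1$. Writing $q$ for the prime at $b_1$, I would construct a solvable group $G$ with a degree divisible by $a_1\cdots a_k$ (forcing the clique on $\rho(A)$) and a degree divisible by $a_1 q$ (forcing the edge $a_1b_1$), while arranging that no degree is divisible by $a_j q$ for $j\ge 2$; such near-complete graphs are realized by explicit constructions, and the adjacencies follow at once from the prescribed degree set.

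\emph{Non-realizable cases.} Suppose toward a contradiction that $G$ is solvable and $\Delta(G)$ is a connected graph on the vertex set $\rho(A)\cup\rho(B)$ contained in $\Gamma_{k,t}$, with $t\ge 2$ and $(k,t)\neq(2,2)$. I would first observe that P\'alfy's condition (Lemma \ref{PC}) gives no leverage here: since $A$ and $B$ are cliques and every non-edge of $\Gamma_{k,t}$ runs between them, no three vertices are mutually non-adjacent, so the condition is satisfied automatically. The real obstruction is structural. Using the normal nonabelian Sylow machinery recalled in the preliminaries (the theory of (strongly) admissible vertices), I would show that the clique structure of $\rho(A)$ and $\rho(B)$, together with the sparse matching between them, forces a normal nonabelian Sylow subgroup for one of these prime sets; the classification of character degree graphs of solvable groups carrying such a subgroup then dictates which vertices must be complete and how the two cliques may be joined. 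The matching pattern of $\Gamma_{k,t}$ --- in particular the vertices $a_{t+1},\dots,a_k$, which are adjacent to nothing in $B$, and the absence (for $t\ge 2$) of any complete vertex --- is incompatible with these forced adjacencies once the graph is strictly larger than $C_4$.

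I would organize this into a minimal-counterexample reduction on $k+t$: when $k>t$ one strips off a free vertex $a_k$, and when $k=t$ one contracts a matched pair, in each case landing in a configuration to which the normal-Sylow classification applies directly, with $C_4$ surfacing as the unique permitted graph having no complete vertex. The spanning-subgraph claim then costs nothing extra: the argument invokes only the clique structure of $A$ and $B$ and the separation forced upon them, never the presence of any specific edge, so deleting edges only sharpens the contradiction and rules out every connected graph on $\rho(A)\cup\rho(B)$ lying inside $\Gamma_{k,t}$. The hardest part will be the case $k=t$, the two equal cliques joined by a perfect matching, where no free vertex is available to strip away; there the interaction between the matching's few cross-edges and the completeness forced by the normal nonabelian Sylow subgroup must be analyzed carefully, and getting the admissible-vertex reduction to apply uniformly --- rather than through ad hoc checks on small $(k,t)$ --- is where most of the work will lie.
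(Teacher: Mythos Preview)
The paper does not prove this theorem at all: it is quoted verbatim from \cite{BissLaub} and used purely as a black box in the admissibility checks for $\SRi{k}{1}{2}$ and $\SR{k}{n}$. There is therefore no ``paper's own proof'' to compare against, and the intended answer here is simply a citation.

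Evaluating your sketch on its own merits, the positive direction is fine. For the negative direction, two points deserve correction. First, your assertion that P\'alfy's condition ``gives no leverage here'' is true for $\Gamma_{k,t}$ itself but false for its proper spanning subgraphs: once an edge inside $B$ (or inside $A$) is deleted, triples such as $\{a_{t+1},b_i,b_j\}$ can become pairwise non-adjacent, so P\'alfy's condition does eliminate many subgraphs and is in fact used in the actual argument of \cite{BissLaub}. This also undercuts your claim that the spanning-subgraph strengthening ``costs nothing extra,'' since your justification relied on P\'alfy being vacuous throughout. Second, the proposed induction on $k+t$ is not a proof as written: ``stripping off $a_k$'' only tells you that $\Gamma_{k-1,t}$ is non-occurring, which by itself says nothing about $\Gamma_{k,t}$ --- that inference is exactly what the admissibility machinery is designed to supply, and it requires verifying conditions (i)--(iii) vertex by vertex rather than a single deletion. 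Likewise ``contracting a matched pair'' has no clear meaning for character degree graphs, and the case $k=t$ you flag as hardest is left entirely open. The actual argument in \cite{BissLaub} proceeds by establishing strong admissibility of suitable vertices (using P\'alfy, disconnected-graph results, and diameter-three bounds on the resulting subgraphs), then invoking Lemma~\ref{pi} and Lemma~\ref{lemma3} in the pattern you see replicated throughout the present paper; your outline gestures at this machinery but does not engage with the case analysis that makes it work.
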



Occasionally, we are faced with subgraphs of the $\SR{k}{n}$ graphs which have diameter three. These graphs were substantially tamed in \cite{Sass}. In particular, having a diameter of three in $\Delta(G)$ imposes limitations on how many vertices are distance three away from one another. Given a point $p$ that has one or more points of distance three away from $p$, there must be exponentially more points that are distance two or three away from $p$ when compared to points adjacent to $p$. These results offer a quick way to check that a subgraph of interest does not occur as $\Delta(G)$ for some solvable $G$, and so in these instances we turn to Theorems 2 or 4 from \cite{Sass}.




Finally, in some instances we may need to consult works done on disconnected graphs, in which case we turn to P\'{a}lfy's inequality from \cite{P2} or Theorem 5.5 from \cite{L2}, both of which are stated fully in \cite{LM}.

\subsection{Normal nonabelian Sylow subgroups}


As in \cite{LM}, a primary tool for arguing that a particular graph is not equal to $\Delta(G)$ for some solvable $G$ involves the absence of normal nonabelian Sylow subgroups. This builds off previous tools established by Bissler and Lewis.

In \cite{BL}, Bissler and Lewis construct a family of graphs that do not occur as the prime character degree graph of any solvable group. The authors make use of P\'{a}lfy's condition and also develop a new tool for determining whether a particular graph (that satisfies P\'{a}lfy's condition) manifests as $\Delta(G)$ for some solvable group $G$. To simplify the language somewhat, we refer to a graph $\Gamma$ as \emph{occurring} if $\Gamma = \Delta(G)$ for some solvable $G$ and \emph{non-occurring} otherwise.
\begin{definition}(\cite{BL})
Let $\Gamma$ be a graph and $p$ a vertex of $\Gamma$. Consider the following three conditions:
\begin{enumerate}[(i)]
  \item the subgraph of $\Gamma$ obtained by removing $p$ and all edges incident to $p$ is non-occurring, 
  \item all of the subgraphs of $\Gamma$ obtained by removing one or more of the edges incident to $p$ are non-occurring, 
  \item all of the subgraphs of $\Gamma$ obtained by removing $p$, the edges incident to $p$, and one or more of the edges between two adjacent vertices of $p$ are non-occurring. 
\end{enumerate}
If $p$ satisfies conditions (i) and (ii), then $p$ is said to be an \textbf{admissible} vertex. If $p$ satisfies all three conditions, then $p$ is said to be a \textbf{strongly admissible} vertex of $\Gamma$.
\end{definition}
The notion of (strong) admissibility offers one pathway to our desired conclusion through use of some technical lemmas establishing an absence of normal nonabelian Sylow subgroups, as follows:

\begin{lemma}\emph{(\cite{BL})}
Let $G$ be a solvable group, and suppose $p$ is an admissible vertex of $\Delta(G)$. For every proper normal subgroup $N$ of $G$, suppose that $\Delta(N)$ is a proper subgraph of $\Delta(G)$. Then $O^p(G)=G$.
\end{lemma}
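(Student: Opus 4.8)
The plan is to argue by contradiction. Suppose $O^p(G)\neq G$ and put $N=O^p(G)$, so that $N$ is a proper normal (indeed characteristic) subgroup of $G$ with $G/N$ a nontrivial $p$-group, and $N$ is solvable. By the standing hypothesis, $\Delta(N)$ is a proper subgraph of $\Delta(G)$, and since $N$ is solvable the graph $\Delta(N)$ is occurring. The goal is to show that $\Delta(N)$ must be one of the subgraphs that the admissibility of $p$ declares non-occurring, which is the desired contradiction.

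First I would pin down precisely how $\Delta(N)$ sits inside $\Delta(G)$. If $\chi\in\Irr(G)$ and $\theta$ is an irreducible constituent of $\chi_N$, then $\chi(1)/\theta(1)$ divides $[G:N]$, which is a power of $p$; hence $\chi(1)/\theta(1)$ is a power of $p$, so $\chi(1)$ and $\theta(1)$ have the same $q$-part for every prime $q\neq p$. Conversely, every $\theta\in\Irr(N)$ lies under some $\chi\in\Irr(G)$. Combining these observations, one gets $\rho(G)\setminus\{p\}\subseteq\rho(N)\subseteq\rho(G)$; that every edge of $\Delta(N)$ is an edge of $\Delta(G)$, so $\Delta(N)$ is a subgraph of $\Delta(G)$; and, crucially, that any two primes $q_1,q_2\in\rho(G)\setminus\{p\}$ are adjacent in $\Delta(N)$ exactly when they are adjacent in $\Delta(G)$. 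In short, $\Delta(N)$ and $\Delta(G)$ agree completely on the vertex set $\rho(G)\setminus\{p\}$; the only freedom is at the vertex $p$.

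A short case analysis then finishes the argument. If $p\notin\rho(N)$, then $\rho(N)=\rho(G)\setminus\{p\}$ and, by the previous step, $\Delta(N)$ is exactly the subgraph of $\Delta(G)$ obtained by removing $p$ and all edges incident to $p$; since this graph is occurring, condition (i) of admissibility is violated. If instead $p\in\rho(N)$, then $\rho(N)=\rho(G)$ and $\Delta(N)$ is obtained from $\Delta(G)$ by deleting some set of edges incident to $p$; because $\Delta(N)$ is a \emph{proper} subgraph of $\Delta(G)$, this set is nonempty, so $\Delta(N)$ is one of the subgraphs obtained by removing one or more edges incident to $p$ — and it is occurring, violating condition (ii). Either way we obtain a contradiction, so $O^p(G)=G$.

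The substance of the proof lies entirely in the second paragraph: everything hinges on the structural fact that replacing $G$ by $N=O^p(G)$ perturbs the degree graph only at the single vertex $p$. Both directions require a little care: that $\Delta(N)$ embeds in $\Delta(G)$ (seen by lifting a constituent $\theta$ of a character of $N$ up to $G$), and that no edge between two primes different from $p$ is lost on passing down to $N$ (seen by restricting $\chi$ and using that $\chi(1)/\theta(1)$ is a $p$-power). Once that is established, the two admissibility conditions dispatch the cases $p\notin\rho(N)$ and $p\in\rho(N)$ essentially automatically.
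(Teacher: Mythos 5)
Your argument is correct. The paper itself does not prove this lemma --- it is quoted from \cite{BL} --- but your reconstruction is essentially the standard proof given there: set $N=O^p(G)$, use the fact that $\chi(1)/\theta(1)$ divides the $p$-power index $|G:N|$ to see that $\Delta(N)$ can differ from $\Delta(G)$ only at the vertex $p$, and then observe that an occurring $\Delta(N)$ would violate condition (i) or (ii) of admissibility according as $p\notin\rho(N)$ or $p\in\rho(N)$.
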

\begin{lemma}\label{strong}\emph{(\cite{BL})}
Let $G$ be a solvable group and assume that $p$ is a prime whose vertex is a strongly admissible vertex of $\Delta(G)$. For every proper normal subgroup $N$ of $G$, suppose that $\Delta(G/N)$ is a proper subgraph of $\Delta(G)$. Then a Sylow $p$-subgroup of $G$ is not normal.
\end{lemma}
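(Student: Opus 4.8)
The plan is to argue by contradiction: assume a Sylow $p$-subgroup $P$ of $G$ is normal, and use strong admissibility to exhibit a quotient of $G$ whose prime character degree graph cannot occur. Since a strongly admissible vertex is in particular a vertex of $\Delta(G)$, some $\chi\in\Irr(G)$ has $p\mid\chi(1)$, so by It\^{o}'s theorem $P$ cannot be abelian (a normal abelian Sylow $p$-subgroup would make every irreducible degree of $G$ prime to $p$). Hence $P'$ is a nontrivial characteristic subgroup of $P$, so $1\neq P'\trianglelefteq G$; moreover $P'\neq G$, since otherwise $G$ is a $p$-group, $\Delta(G)$ has at most one vertex, and deleting $p$ leaves the (occurring) empty graph, contradicting condition (i). Thus $P'$ is a proper nontrivial normal subgroup of $G$, and the hypothesis gives that $\Delta(G/P')$ is a proper subgraph of $\Delta(G)$.

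Next I would pin down how $\Delta(G/P')$ sits inside $\Delta(G)$. As $P/P'$ is an abelian normal Sylow $p$-subgroup of $G/P'$, It\^{o}'s theorem gives $p\notin\rho(G/P')$. The rest is read off by Clifford theory: if $\chi\in\Irr(G)$ satisfies $q\mid\chi(1)$ for a prime $q\neq p$, choose an irreducible constituent $\theta$ of $\chi_P$; when $\theta$ is linear, every $G$-conjugate of $\theta$ is linear, so $P'\leq\ker\chi$ and $\chi$ descends to $G/P'$, while when $\theta$ is nonlinear, $p\mid\theta(1)\mid\chi(1)$, so $q$ is adjacent to $p$ in $\Delta(G)$ and the same dichotomy applied to the neighbours of $q$ shows that any edge of $\Delta(G)$ missing from $\Delta(G/P')$ joins two vertices adjacent to $p$. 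Carrying this through, one wants to conclude that $\rho(G/P')=\rho(G)\setminus\{p\}$ and that $\Delta(G/P')$ is obtained from $\Delta(G)$ exactly by deleting $p$, all edges incident to $p$, and possibly one or more edges between two vertices that were adjacent to $p$. Establishing this description --- especially that no vertex other than $p$ is lost --- is the technical heart of the proof and the step I expect to be the main obstacle; it is precisely where the analysis of solvable groups with a normal nonabelian Sylow subgroup from \cite{BL} is needed.

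Granting the description, strong admissibility finishes the argument at once. If passing to $G/P'$ deletes no edge between two neighbours of $p$, then $\Delta(G/P')$ is the subgraph of $\Delta(G)$ obtained by removing $p$ and all edges incident to $p$, which is non-occurring by condition (i). If at least one such edge is deleted, then $\Delta(G/P')$ is one of the subgraphs described in condition (iii), hence again non-occurring. In either case $\Delta(G/P')$ cannot be the prime character degree graph of any solvable group, contradicting the fact that it is the graph of the solvable group $G/P'$. Therefore a Sylow $p$-subgroup of $G$ is not normal.
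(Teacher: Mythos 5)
The paper does not prove this lemma itself---it is quoted from \cite{BL}---so I am comparing your outline against the standard argument from that source, and your strategy is exactly the right one. The contradiction setup, the passage to $G/P'$, the two uses of It\^{o}'s theorem (to force $P$ nonabelian and to get $p\notin\rho(G/P')$), and the Clifford-theoretic dichotomy on the constituents of $\chi_P$ showing that every lost edge joins two neighbours of $p$ are all correct. The one step you explicitly leave open---that $\rho(G/P')=\rho(G)\setminus\{p\}$, i.e.\ no vertex other than $p$ is lost---is genuinely needed (without it, $\Delta(G/P')$ need not be one of the graphs named in conditions (i) and (iii) of strong admissibility), so as written the proof has a gap. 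It closes with a short argument you should supply: suppose $q\in\rho(G)$, $q\neq p$, but $q\notin\rho(G/P')$. By the solvable case of the It\^{o}--Michler theorem, $G/P'$ has a normal abelian Sylow $q$-subgroup, necessarily of the form $QP'/P'$ with $Q$ a Sylow $q$-subgroup of $G$; then $Q\cong QP'/P'$ is abelian and $QP'$ is normal in $G$. The Frattini argument gives $G=(QP')N_G(Q)=P'N_G(Q)$, and intersecting with $P$ and applying Dedekind's lemma yields $P=P'(P\cap N_G(Q))$. Since $P'\leq\Phi(P)$ and the Frattini subgroup consists of non-generators, $P=P\cap N_G(Q)$, so $P\leq N_G(Q)$ and hence $G=P'N_G(Q)=N_G(Q)$. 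Thus $Q$ is a normal abelian Sylow $q$-subgroup of $G$, which by It\^{o}'s theorem contradicts $q\in\rho(G)$. With this in place, $\Delta(G/P')$ is precisely $\Delta(G)$ with $p$ and its incident edges deleted, possibly together with some edges between neighbours of $p$, and conditions (i) and (iii) give the contradiction exactly as you describe.

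One small remark: once you know $p\in\rho(G)\setminus\rho(G/P')$, the graph $\Delta(G/P')$ is automatically a proper subgraph of $\Delta(G)$, so the hypothesis about quotients by proper normal subgroups is not actually doing any work in your argument; invoking it is harmless but unnecessary.
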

\begin{lemma}\label{pi}\emph{(\cite{BL})}
Let $\Gamma$ be a graph satisfying P\'alfy's condition. Let $q$ be a vertex of $\Gamma$, and denote $\pi$ to be the set of vertices of $\Gamma$ adjacent to $q$, and $\rho$ to be the set of vertices of $\Gamma$ not adjacent to $q$. Assume that $\pi$ is the disjoint union of nonempty sets $\pi_1$ and $\pi_2$, and assume that no vertex in $\pi_1$ is adjacent in $\Gamma$ to any vertex in $\pi_2$. Let $v$ be a vertex in $\pi_2$ adjacent to an admissible vertex $s$ in $\rho$. Furthermore, assume there exists another vertex $w$ in $\rho$ that is not adjacent to $v$.

Let $G$ be a solvable group such that $\Delta(G)=\Gamma$, and assume that for every proper normal subgroup $N$ of $G$, $\Delta(N)$ is a proper subgraph of $\Delta(G)$. Then a Sylow $q$-subgroup of $G$ for the prime associated to $q$ is not normal.
\end{lemma}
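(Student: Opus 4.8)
The plan is to argue by contradiction: assume that a Sylow $q$-subgroup $Q$ is normal in $G$ and derive a contradiction with the hypotheses. First I would record two easy but useful reductions. Since $q$ is a vertex of $\Delta(G)$ and $Q\trianglelefteq G$, the theorem of It\^o--Michler forces $Q$ to be \emph{nonabelian}, because a normal abelian Sylow $q$-subgroup would make $q\notin\rho(G)$. Second, I would apply P\'alfy's condition (Lemma~\ref{PC}) to the triple $\{q,s,w\}$: these are three distinct vertices (note $s\neq w$ because $s$ is adjacent to $v$ while $w$ is not, and $s,w\neq q$ since $s,w\in\rho$), and since neither $s$ nor $w$ is adjacent to $q$, the forced edge must be $s\sim w$. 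Thus under the hypotheses we automatically know that $s$ and $w$ are adjacent in $\Gamma$.

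Next I would translate the neighbor structure of $q$ into Clifford-theoretic data over the normal subgroup $Q$. Because $Q$ is a normal $q$-group, every $\chi\in\Irr(G)$ lies over some $\theta\in\Irr(Q)$, and since $\chi(1)/\theta(1)$ divides the $q'$-number $[G:Q]$, the $q$-part of $\chi(1)$ equals the $q$-power $\theta(1)$; hence $q\mid\chi(1)$ precisely when $\theta$ is nonlinear. Consequently a prime $p$ is adjacent to $q$ exactly when some $\chi$ lying over a nonlinear $\theta\in\Irr(Q)$ has $p\mid\chi(1)$, i.e. when $p$ divides either $[G:I_G(\theta)]$ or a degree in $\Irr(I_G(\theta)\mid\theta)$ for some nonlinear $\theta$. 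The key structural step is to show that the partition $\pi=\pi_1\sqcup\pi_2$ with \emph{no} edges between the parts forces a corresponding splitting at the level of $G$: the primes of $\pi_2$ are realized through a normal section of $G$ independent of the section realizing $\pi_1$, so that $G$ has a proper normal subgroup $N$ whose quotient $G/N$ isolates the $\pi_2$-side of the picture (the part containing $v$ together with its external neighbor $s$).

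With such a decomposition in hand, I would bring in the admissibility of $s$. By the admissible-vertex lemma immediately preceding Lemma~\ref{strong}, together with the standing minimality hypothesis that $\Delta(N)$ is a proper subgraph of $\Delta(G)$ for every proper normal $N$, we obtain $O^s(G)=G$; that is, $s$ cannot be removed by passing to a proper normal subgroup of $s$-power index. I would then play this against the isolated $\pi_2$-section: the relations $v\sim s$, the P\'alfy-forced $s\sim w$, and the non-edge $w\not\sim v$ pin down how $s$ must attach to the $\pi_2$-world. The contradiction should come in one of two shapes: either the isolated section forces $s$ to arise as the prime of a proper quotient, violating $O^s(G)=G$; or the induced configuration on $\{v,s,w\}$, with $v$ and $w$ in complementary roles relative to $s$, reproduces inside a quotient or subgroup of $G$ a forbidden graph of $\Gamma_{k,t}$-type ruled out by Theorem~\ref{KT} (or violates P\'alfy's condition in that section).

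The main obstacle is the middle step: rigorously promoting the purely graph-theoretic disconnection $\pi_1\sqcup\pi_2$ into an honest group-theoretic splitting of the action of $G/C_G(Q)$ on $\Irr(Q)$ (equivalently on $Q/\Phi(Q)$ and on the nonlinear characters of $Q$), and controlling precisely which inertia groups and chief factors carry the $\pi_2$-primes. Managing the interaction of the \emph{external} admissible vertex $s\in\rho$ with this internal splitting — in particular ensuring that its forced adjacency to $w$ and non-adjacency to $v$ are incompatible with $O^s(G)=G$ — is where the real work lies; the It\^o--Michler reduction, the P\'alfy deductions, and the invocation of the admissible-vertex lemma are comparatively routine bookkeeping around it.
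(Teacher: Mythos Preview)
This lemma is not proved in the present paper at all: it is quoted verbatim from \cite{BL} (Bissler and Lewis) and used as a black box, so there is no ``paper's own proof'' to compare your attempt against. Any assessment must therefore be on the internal merits of your sketch.

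On those merits, your opening reductions are sound: the It\^o--Michler step forcing $Q$ to be nonabelian is correct, and the application of P\'alfy's condition to $\{q,s,w\}$ to deduce $s\sim w$ is valid (both $s,w\in\rho$ are non-neighbors of $q$, so the only possible edge is $s\sim w$). Likewise, invoking the admissible-vertex lemma to obtain $O^s(G)=G$ under the stated minimality hypothesis is legitimate.

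However, the proposal is not a proof but an outline with a self-identified hole. The decisive step --- turning the purely combinatorial partition $\pi=\pi_1\sqcup\pi_2$ (with no cross-edges) into an actual normal decomposition of $G$ that isolates a ``$\pi_2$-section'' carrying $v$ and $s$ --- is asserted rather than established. You explicitly flag this as ``where the real work lies,'' and that is accurate: nothing you have written explains why the absence of edges between $\pi_1$ and $\pi_2$ inside the neighbor set of $q$ forces a corresponding splitting of inertia groups, chief factors, or of the action of $G/C_G(Q)$ on $\Irr(Q)$. Without that bridge, the downstream contradiction (whether via $O^s(G)=G$ or a $\Gamma_{k,t}$-type obstruction) never gets off the ground. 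The two alternative endgames you propose are also left disjunctive and speculative; a genuine proof must commit to one mechanism and carry it through. In short: the scaffolding is reasonable, but the load-bearing argument is missing, and you have correctly diagnosed where it is missing.
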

We shall employ the above lemmas to aid in the proof of our main result. Checking admissibility is done by direct computation and considering the necessary cases. This is often achieved through the use of several results, namely P\'{a}lfy's condition, Theorem \ref{KT}, and diameter three arguments from the results of \cite{Sass}. 

Once the issue of normal nonabelian Sylow subgroups is addressed, we can use the following to obtain a final contradiction:

\begin{lemma}\label{lemma3}\emph{(\cite{BL})}
Let $\Gamma$ be a graph satisfying P\'alfy's condition with $n\geq5$ vertices. Also, assume there exist distinct vertices $a$ and $b$ of $\Gamma$ such that $a$ is adjacent to an admissible vertex $c$, $b$ is not adjacent to $c$, and $a$ is not adjacent to an admissible vertex $d$.

Let $G$ be a solvable group and suppose for all proper normal subgroups $N$ of $G$ we have that $\Delta(N)$ and $\Delta(G/N)$ are proper subgraphs of $\Gamma$. Let $F$ be the Fitting subgroup of $G$ and suppose that $F$ is minimal normal in $G$. Then $\Gamma$ is not the prime character degree graph of any solvable group.
\end{lemma}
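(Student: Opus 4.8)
The plan is to assume, for contradiction, that $\Gamma=\Delta(G)$ for a solvable group $G$ meeting all the hypotheses, and to extract a contradiction from the rigidity imposed by $F=\mathbf{F}(G)$ being minimal normal. First I would record the structure this forces. Since $G$ is solvable and $F$ is minimal normal, $F$ is elementary abelian, say a $p$-group; as $C_G(\mathbf{F}(G))=Z(\mathbf{F}(G))$ for solvable groups and $F$ is abelian, we get $C_G(F)=F$. Hence $G/F$ embeds in $\mathrm{GL}(F)$ and acts faithfully and irreducibly on $V:=F$ viewed as an $\mathbb{F}_p$-space. Two further consequences are worth isolating: $F$ is the \emph{unique} minimal normal subgroup of $G$ (any minimal normal subgroup lies in $\mathbf{F}(G)=F$), and $[G,F]=F$, so $F\le G'$ and therefore $G/G'\cong (G/F)/(G/F)'$.

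Next I would translate the character degrees through Clifford theory relative to $F\trianglelefteq G$. Every $\chi\in\Irr(G)$ either contains $F$ in its kernel, and so is inflated from $G/F$, or satisfies $F\cap\ker\chi=1$ and lies over a nontrivial $\lambda\in\Irr(F)$; in the latter case $\chi(1)=[G:I_G(\lambda)]\,\psi(1)$ with $\psi\in\Irr(I_G(\lambda)\mid\lambda)$, and $[G:I_G(\lambda)]$ is precisely the size of the $G/F$-orbit of $\lambda$ on $\Irr(F)$. Thus $\Delta(G)$ is governed by two overlapping pieces: the proper subgraph $\Delta(G/F)$ (proper by hypothesis, since $F\neq 1$), and the adjacencies manufactured by the orbit sizes of the faithful irreducible action of $G/F$ on $\Irr(F)$ together with the inertia degrees $\psi(1)$. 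The heart of the proof is to show that these two pieces cannot simultaneously reproduce the prescribed local picture around $a,b,c,d$.

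The admissible vertices drive the contradiction. Because for every proper normal subgroup $N$ the graph $\Delta(N)$ is a proper subgraph of $\Gamma$, the admissibility lemma preceding Lemma \ref{strong} yields $O^{c}(G)=G$ and $O^{d}(G)=G$; equivalently $c,d\nmid |G/G'|$, and since $F\le G'$ this says $c,d\nmid |(G/F)^{ab}|$. I would then use the distinguished adjacency pattern---$a\sim c$ with $c$ admissible, $a\not\sim d$ with $d$ admissible, and $b\not\sim c$---to locate the prime $p$ of $F$ and to constrain the orbit structure: Pálfy's condition (Lemma \ref{PC}) applied to triples drawn from $\{a,b,c,d\}$ and their neighbours forces the neighbourhood of the relevant vertex to split into two mutually non-adjacent parts, exactly the configuration needed to feed either Lemma \ref{pi} or, after passing to a quotient $G/N$, the strong-admissibility Lemma \ref{strong}. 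Either route produces a prime whose Sylow subgroup of $G$ is not normal; combined with the orbit-size analysis this forces a connected subgraph, on the same vertex set, of some $\Gamma_{k,t}$ with $k\ge t\ge 2$ ruled out by Theorem \ref{KT}, or else produces a proper normal subgroup or quotient whose graph equals all of $\Gamma$, contradicting minimality. In each case the non-edge $a\not\sim d$ or $b\not\sim c$ is the obstruction that cannot coexist with the edges the action is compelled to create.

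I expect the main obstacle to be the third step: faithfully converting the abstract adjacency data into hypotheses of Lemmas \ref{pi} and \ref{strong} for the correct prime, and controlling the orbit sizes of the faithful irreducible action of $G/F$ on $\Irr(F)$ finely enough that no inertia degree $\psi(1)$ silently creates a missing edge or destroys a required non-edge. Concretely one must rule out the degenerate placements of the prime $p$ relative to $\{a,b,c,d\}$ and verify, at each application, that the subgroup and quotient graphs invoked remain proper subgraphs of $\Gamma$, so that the minimality hypotheses and the admissibility lemmas genuinely apply. Once the correct vertex is identified and fed into the non-normality machinery, the clash with the fact that $F$ is the unique minimal normal subgroup---so that $G$ cannot carry the normal Sylow structure the argument forces---closes the proof.
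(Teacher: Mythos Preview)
The paper does not prove this lemma; it is quoted verbatim from \cite{BL} and used as a black box, so there is no in-paper proof to compare against.

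On its own merits, your outline begins correctly: with $F$ minimal normal in a solvable $G$ one does get $F$ elementary abelian of prime exponent, $C_G(F)=F$, and the Clifford-theoretic split of $\cd(G)$ into degrees from $G/F$ and degrees arising from nontrivial $\lambda\in\Irr(F)$. The deduction $O^c(G)=G=O^d(G)$ from admissibility and the hypothesis on $\Delta(N)$ is also right. But the decisive third step is not a proof; it is a list of tools with no mechanism tying them together. In particular, invoking Theorem~\ref{KT} is misplaced: that result classifies the specific family $\Gamma_{k,t}$, while Lemma~\ref{lemma3} is stated for an arbitrary $\Gamma$ satisfying P\'alfy's condition, so you cannot assume any subgraph of $\Gamma$ is of that form. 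Likewise, Lemma~\ref{pi} requires a very specific partition of the neighbourhood of a fixed vertex into two mutually nonadjacent pieces, and nothing in the hypotheses of Lemma~\ref{lemma3} guarantees such a partition exists for any vertex of $\Gamma$.

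What the argument in \cite{BL} actually does at this point is work directly with the irreducible faithful action of $G/F$ on $F$: one shows that the orbit sizes on $\Irr(F)\setminus\{1\}$, combined with $O^c(G)=G=O^d(G)$, force a character degree divisible by both $a$ and $d$ (or by both $b$ and $c$), contradicting the assumed non-edges. The specific non-adjacencies $a\not\sim d$ and $b\not\sim c$ are used to constrain which primes can divide a common orbit length, and the admissibility of $c$ and $d$ is what prevents the relevant primes from being pushed into an abelian quotient. Your sketch gestures at this but does not identify the actual degree that produces the forbidden edge, which is the whole content of the lemma.
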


While this process is generally straightforward, the language of admissibility can sometimes become unwieldy in writing -- this is because we must consider a graph and a multitude of subgraphs with particular vertices or edges removed. So, we introduce the following notation to simplify the proof's writing and better identify subgraphs of interest. 

Let $\Gamma$ be a graph, and $p,q$ be vertices of $\Gamma$. We shall use the notation $\Gamma[p]$ to denote the subgraph of $\Gamma$ obtained by deleting $p$ and its incident edges from $\Gamma$. Since the graphs under consideration are devoid of multi-edges, we will use the expression $\epsilon(p,q)$ to refer to the edge joining $p$ and $q$ (and, naturally, $\epsilon(p,q) = \epsilon(q,p)$ since these graphs are not directed). Furthermore, we shall use $\Gamma[\epsilon(p,q)]$ to denote the subgraph of $\Gamma$ obtained by deleting the edge between $p$ and $q$, but not the vertices themselves. We can then denote multiple deletions by separating items inside the brackets with a comma: for instance, $\Gamma[p_1,\epsilon(p_2,p_3)]$ would be the subgraph of $\Gamma$ obtained by deleting $p_1$ and its incident edges, along with the edge connecting $p_2$ and $p_3$.

\subsection{Multiple special vertices}


As previously mentioned, a different approach is needed to classify the $\{\SR{k}{n}\}$ family with regards to which graphs manifest as $\Delta(G)$ for some solvable $G$. Specifically, we are required to take a necessary detour through a family with a construction near-identical to that of $\{\SR{k}{n}\}$. Recall that $\SR{k}{n}$ has a left-hand subgraph $A$, a right-hand subgraph $B$, and one ``special'' vertex $c$, in the sense that $c$ is adjacent only to vertices in $B$. In this paper, we will also consider a graph family with \emph{multiple} special vertices adjacent to the right-hand subgraph, which we will denote with $\{\SRi{k}{n}{m}\}$, where $m \geq 1$ and $1 \leq n \leq k$. The graph $\SRi{k}{n}{m}$ consists of three distinct subgraphs $A$, $B$, and $C$, and obeys the following:

\begin{enumerate}[(i)]
    \item the graphs $A$, $B$, and $C$ are complete graphs on $k$, $k+n$, and $m$ vertices, respectively,
    \item $\rho(A) = \{a_1, \ldots, a_k\}$, $\rho(B) = \{b_1, \ldots, b_k, \ldots, b_{k+n}\}$, and $\rho(C) = \{c_1, \ldots, c_m\}$ are all mutually  disjoint,
    \item the edge set of $\SRi{k}{n}{m}$ consists of all edges from $A$, $B$, and $C$, together with the following additional edges:
    \begin{itemize}
        \item $\epsilon(a_i,b_i)$ for all $1 \leq i \leq k$,
        \item $\epsilon(a_i,b_{k+i})$ for all $1 \leq i \leq n$,
        \item $\epsilon(b,c)$ for every $b \in \rho(B)$ and $c \in \rho(C)$.
    \end{itemize}
\end{enumerate}

\begin{remark}\label{mRrmk}
One could define a family of graphs $\{\Sigma_{k,n}^{mL}\}$ by replacing the third bullet point with ``$\epsilon(a,c)$ for every $a \in \rho(A)$ and $c \in \rho(C)$.'' However, consideration of this family was not necessary for classifying $\{\SL{k}{n}\}$, so we do not explore it any further in this paper.
\end{remark}

The above is a modification of the previous construction of the graph $\SR{k}{n}$ in that there are now precisely $m$ points incident to points in $B$ and not incident to points in $A$, as opposed to the single point added in $\SR{k}{n}$ (which is also $\SRi{k}{n}{1}$). These new graphs become important when we note that $\SRi{k}{i}{(n-i+1)}$ is a subgraph of $\SRi{k}{i+1}{(n-i)}$ with the same vertex set for $i$ satisfying $1 \leq i \leq n-2$.



\section{The family of graphs $\{\SR{k}{n}\}$}

The goal of this section is to prove our main theorem, stated again below:

\begin{thmmain}
The graph $\SR{k}{n}$ occurs as the prime character degree graph of a solvable group when $(k,n)=(1,1)$ (see Figure \ref{fig11R}), and possibly when $(k,n)\in\{(2,1),(2,2)\}$ (see Figure \ref{figRmaybe}). Otherwise $\SR{k}{n}$ does not occur as the prime character degree graph of any solvable group.
\end{thmmain}

It is actually quite easy to see that the graph $\SR{1}{1}$ occurs as the prime character degree graph of a solvable group; this result is well-established and can be found in \cite{H2} (or archived in \cite{L3}). The graphs $\SR{2}{1}$ and $\SR{2}{2}$ continue to elude us, however, 
because both have a subgraph with the same vertex set whose disconnected components do not violate P\'alfy's inequality from \cite{P2}. In fact, both of their disconnected subgraphs occur as $\Delta(G)$ for some solvable group $G$ where they have disconnected components of size $2$ and $4$, and of size $2$ and $5$, respectively. Finally, we note that $\SR{2}{1}$ is one of the nine unknown graphs that went unclassified in \cite{BL2}.


The proof technique of Theorem \ref{Rfamilyresult} closely follows that of the main theorem of \cite{LM}. While in broad strokes the process is the same, in the instance of the family $\{\SR{k}{n}\}$ the order in which claims and sub-claims are proved is more delicate. So, here we shall outline the overall proof's structure. 

At the top level, the proof of Theorem \ref{Rfamilyresult} is a proof by induction on $n$. This means that at every step $t$ in the inductive process, we are proving the non-occurrence of the family $\{\SR{k}{t}\}$. Note that for the graph $\SR{k}{t}$ to be defined, we need to have $k \geq t$; we shall henceforth operate under the assumption that, wherever denoted, $k$ satisfies the inequality $k \geq \max{\{t,3\}}$ (since the cases $k=1$ and $k=2$ were discussed above). 

Non-occurrence of the family $\{\SR{k}{1}\}$ then serves as the basis of induction. The proof of this step closely aligns with the basis step of the induction in \cite{LM}, so many details will be omitted. However, as $n$ increases, we see a significant departure from the argument of \cite{LM}. To prove the non-occurrence of $\SR{k}{2}$, for example, we must rely on the  non-occurrence of $\SR{k}{1}$ as well as the non-occurrence of $\SRi{k}{1}{2}$. This is due to our reliance on the tool of admissible vertices and, by extension, the non-occurrence of subgraphs.

Each individual stage of induction operates as a proof by contradiction -- that is, we assume that $\SR{k}{t} = \Delta(G)$ for some solvable $G$. We illustrate that many of the vertices are (strongly) admissible, and those that aren't satisfy Lemma \ref{pi} or the technical hypothesis found in \cite{BLL}. This leads to the conclusion that there is no normal nonabelian Sylow $q$-subgroup for any $q \in \rho(G)$. With some additional work, we are able to conclude that the Fitting subgroup of $G$ is minimal normal and then apply Lemma \ref{lemma3}.

Checking admissibility in the inductive step requires consideration of the subgraph $\SRi{k}{t}{2}$, and the non-occurrence of this graph is implied by the non-occurrence of $\SRi{k}{1}{(t+1)}$, the non-occurrence of which follows from the inductive hypothesis in a manner similar to that of the non-occurrence of $\SRi{k}{1}{2}$.

\subsection{The base case.}\label{BCsubsection}

As stated above, we omit many details from the proof of our base case, seen below. The arguments follow closely to what was done in \cite{LM}, and one can see Figure \ref{figk1R} for examples of graphs with $n=1$.

\begin{figure}[htb]
    \centering
$
\begin{tikzpicture}[scale=2]
\node (0a) at (1.75,.5) {$c$};
\node (1a) at (.5,.5) {$a_1$};
\node (11a) at (1.25,1) {$b_1$};
\node (22a) at (1.25,0) {$b_2$};
\path[font=\small,>=angle 90]
(0a) edge node [right] {$ $} (11a)
(0a) edge node [right] {$ $} (22a)
(11a) edge node [right] {$ $} (22a)
(1a) edge node [right] {$ $} (11a)
(1a) edge node [right] {$ $} (22a);
\node (0b) at (4.25,.5) {$c$};
\node (1b) at (2.5,1) {$a_1$};
\node (2b) at (2.5,0) {$a_2$};
\node (33b) at (3.25,.5) {$b_3$};
\node (11b) at (3.75,1) {$b_1$};
\node (22b) at (3.75,0) {$b_2$};
\path[font=\small,>=angle 90]
(0b) edge node [right] {$ $} (11b)
(0b) edge node [above] {$ $} (22b)
(0b) edge node [above] {$ $} (33b)
(1b) edge node [above] {$ $} (2b)
(11b) edge node [above] {$ $} (22b)
(11b) edge node [above] {$ $} (33b)
(22b) edge node [above] {$ $} (33b)
(1b) edge node [above] {$ $} (11b)
(2b) edge node [above] {$ $} (22b)
(1b) edge node [above] {$ $} (33b);
\node (0c) at (7.25,.5) {$c$};
\node (2c) at (5,1) {$a_2$};
\node (3c) at (5,0) {$a_3$};
\node (1c) at (5.5,.5) {$a_1$};
\node (11c) at (6.25,.75) {$b_1$};
\node (44c) at (6.25,.25) {$b_4$};
\node (22c) at (6.75,1) {$b_2$};
\node (33c) at (6.75,0) {$b_3$};
\path[font=\small,>=angle 90]
(0c) edge node [right] {$ $} (11c)
(0c) edge node [above] {$ $} (22c)
(0c) edge node [above] {$ $} (33c)
(0c) edge node [above] {$ $} (44c)
(1c) edge node [above] {$ $} (2c)
(1c) edge node [above] {$ $} (3c)
(2c) edge node [right] {$ $} (3c)
(11c) edge node [above] {$ $} (22c)
(11c) edge node [right] {$ $} (33c)
(11c) edge node [above] {$ $} (44c)
(22c) edge node [above] {$ $} (33c)
(22c) edge node [above] {$ $} (44c)
(33c) edge node [above] {$ $} (44c)
(1c) edge node [above] {$ $} (11c)
(2c) edge node [above] {$ $} (22c)
(3c) edge node [above] {$ $} (33c)
(1c) edge node [above] {$ $} (44c);
\node (1d) at (3.25,-1) {$c$};
\node (2d) at (.5,-.5) {$a_3$};
\node (3d) at (.5,-1.5) {$a_4$};
\node (4d) at (1,-.75) {$a_1$};
\node (5d) at (1,-1.25) {$a_2$};
\node (6d) at (2.25,-.5) {$b_3$};
\node (7d) at (2.25,-1.5) {$b_4$};
\node (8d) at (2.75,-.5) {$b_1$};
\node (9d) at (2.75,-1.5) {$b_2$};
\node (10d) at (1.75,-1) {$b_5$};
\path[font=\small,>=angle 90]
(1d) edge node [right] {$ $} (6d)
(1d) edge node [right] {$ $} (7d)
(1d) edge node [right] {$ $} (8d)
(1d) edge node [right] {$ $} (9d)
(1d) edge node [above] {$ $} (10d)
(2d) edge node [right] {$ $} (3d)
(2d) edge node [right] {$ $} (4d)
(2d) edge node [right] {$ $} (5d)
(3d) edge node [right] {$ $} (4d)
(3d) edge node [right] {$ $} (5d)
(4d) edge node [right] {$ $} (5d)
(6d) edge node [right] {$ $} (7d)
(6d) edge node [right] {$ $} (8d)
(6d) edge node [right] {$ $} (9d)
(6d) edge node [right] {$ $} (10d)
(7d) edge node [right] {$ $} (8d)
(7d) edge node [right] {$ $} (9d)
(7d) edge node [right] {$ $} (10d)
(8d) edge node [right] {$ $} (9d)
(8d) edge node [right] {$ $} (10d)
(9d) edge node [right] {$ $} (10d)
(2d) edge node [right] {$ $} (6d)
(3d) edge node [right] {$ $} (7d)
(4d) edge node [right] {$ $} (8d)
(5d) edge node [right] {$ $} (9d)
(4d) edge node [right] {$ $} (10d);
\node (1) at (7.25,-1) {$c$};
\node (2) at (4,-.75) {$a_4$};
\node (3) at (4,-1.25) {$a_5$};
\node (4) at (4.5,-.5) {$a_2$};
\node (5) at (4.5,-1.5) {$a_3$};
\node (6) at (5,-1) {$a_1$};
\node (7) at (5.75,-.75) {$b_1$};
\node (8) at (5.75,-1.25) {$b_6$};
\node (11) at (6.25,-.5) {$b_2$};
\node (12) at (6.25,-1.5) {$b_3$};
\node (9) at (6.75,-.575) {$b_4$};
\node (10) at (6.75,-1.425) {$b_5$};
\path[font=\small,>=angle 90]
(1) edge node [right] {$ $} (7)
(1) edge node [right] {$ $} (8)
(1) edge node [right] {$ $} (9)
(1) edge node [right] {$ $} (10)
(1) edge node [right] {$ $} (11)
(1) edge node [right] {$ $} (12)
(2) edge node [right] {$ $} (3)
(2) edge node [right] {$ $} (4)
(2) edge node [right] {$ $} (5)
(2) edge node [right] {$ $} (6)
(3) edge node [right] {$ $} (4)
(3) edge node [right] {$ $} (5)
(3) edge node [right] {$ $} (6)
(4) edge node [right] {$ $} (5)
(4) edge node [right] {$ $} (6)
(5) edge node [right] {$ $} (6)
(7) edge node [right] {$ $} (8)
(7) edge node [right] {$ $} (9)
(7) edge node [right] {$ $} (10)
(7) edge node [right] {$ $} (11)
(7) edge node [right] {$ $} (12)
(8) edge node [right] {$ $} (9)
(8) edge node [right] {$ $} (10)
(8) edge node [right] {$ $} (11)
(8) edge node [right] {$ $} (12)
(9) edge node [right] {$ $} (10)
(9) edge node [right] {$ $} (11)
(9) edge node [right] {$ $} (12)
(10) edge node [right] {$ $} (11)
(10) edge node [right] {$ $} (12)
(11) edge node [right] {$ $} (12)
(2) edge node [right] {$ $} (9)
(3) edge node [right] {$ $} (10)
(4) edge node [right] {$ $} (11)
(5) edge node [right] {$ $} (12)
(6) edge node [right] {$ $} (7)
(6) edge node [right] {$ $} (8);
\end{tikzpicture}
$
    \caption{Examples of graphs in the family $\{\SR{k}{1}\}$: $1\leq k\leq5$}
    \label{figk1R}
\end{figure}

\begin{proposition}\label{n1case}
Let $k\geq3$. The graph $\SR{k}{1}$ is not the prime character degree graph of any solvable group.
\end{proposition}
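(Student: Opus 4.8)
The plan is to argue by contradiction, following the base case of \cite{LM} closely and running an inner induction on $k \ge 3$ (with $k=3$ handled directly, as there). Suppose $\SR{k}{1} = \Delta(G)$ for some solvable group $G$; by the standard reductions we may take $G$ of least order, so that $\Delta(N)$ and $\Delta(G/N)$ are proper subgraphs of $\SR{k}{1}$ for every proper normal $N \trianglelefteq G$, and we may assume that $\SR{k'}{1}$ and the relevant members of $\{\SRi{k'}{1}{m}\}$ are already known to be non-occurring for $k' < k$. First record that $\SR{k}{1}$ satisfies P\'alfy's condition (Lemma \ref{PC}): since $c$ is joined to every vertex of the clique $B$, the set $B \cup \{c\}$ is itself a clique, so among any three vertices either two lie in the clique $A$ or two lie in the clique $B \cup \{c\}$, and an edge is spanned. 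The graph is connected as well.

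The heart of the proof is a sequence of claims identifying the (strongly) admissible vertices; I expect $a_2, \dots, a_k$ to be strongly admissible, and since these are interchangeable it suffices to treat $a_k$. Condition (ii) is handled combinatorially: deleting any ``horizontal'' edge $\epsilon(a_k, a_j)$ with $j < k$ yields the independent triple $\{a_k, a_j, b_i\}$ for a suitable $b_i$ not adjacent to $a_j$ (which exists since $k \ge 3$), so P\'alfy's condition fails; deleting any set of edges incident to $a_k$ that includes such an edge likewise breaks P\'alfy's condition; and the one remaining possibility, deleting only the cross edge $\epsilon(a_k, b_k)$, leaves $a_k$ at distance three from $c$, so the diameter-three theorems of \cite{Sass} apply. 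Condition (iii) is similar once one notes that $\SR{k}{1}[a_k] \cong \SRi{k-1}{1}{2}$: deleting any horizontal edge among $a_1, \dots, a_{k-1}$ there produces an independent triple with a vertex of the special set $\{b_k, c\}$, again defeating P\'alfy's condition. The only substantive requirement is condition (i), the non-occurrence of $\SRi{k-1}{1}{2}$ itself, which is supplied by the companion analysis of the family $\{\SRi{k}{1}{m}\}$ — carried out by a descending induction on the left-hand size, since deleting the last non-doubled left vertex of $\SRi{k'}{1}{m}$ produces $\SRi{k'-1}{1}{m+1}$, with the small cases handled directly (for instance, via the disconnected-graph results of \cite{P2} and \cite{L2}).

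The remaining vertices — $a_1$, the $b_i$, and $c$ — need not all be strongly admissible, precisely because $c$ is attached to the whole clique $B$; this is the feature separating the $R$ variant from the $L$ variant of \cite{LM}, and it is where Lemma \ref{pi} and the technical hypothesis of \cite{BLL} enter. Having shown enough of the $b_i$ to be admissible, one applies Lemma \ref{pi} to $q = a_1$ using the split of its neighborhood into $\{a_2, \dots, a_k\}$ and $\{b_1, b_{k+1}\}$ (with no edges between), an admissible $b_i$ in the role of $s$, $v = a_i$, and a non-neighbor $b_j$ ($j \neq i$) of $a_i$ in the role of $w$; $c$ is handled via the hypothesis of \cite{BLL}; and the $b_i$ are handled by strong admissibility or Lemma \ref{pi} as required. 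In every case the conclusion is that no Sylow $q$-subgroup of $G$ is normal, for any prime $q \in \rho(G)$. Arguing as in \cite{LM}, this absence of normal nonabelian Sylow subgroups, together with the hypotheses on normal subgroups and quotients, forces $F = \mathbf{F}(G)$ to be minimal normal in $G$. The hypotheses of Lemma \ref{lemma3} are then met: $\SR{k}{1}$ satisfies P\'alfy's condition, has $2k + 2 \ge 8$ vertices, and — in the notation of that lemma — one may take $a = b_2$, $b = c$, the admissible vertex adjacent to $a$ to be $a_2$, and the admissible vertex not adjacent to $a$ to be $a_3$ (note $c$ is not adjacent to $a_2$, and $b_2$ is not adjacent to $a_3$). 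Lemma \ref{lemma3} then yields the contradiction.

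The step I expect to be the main obstacle is the companion non-occurrence result for $\{\SRi{k}{1}{m}\}$ that underwrites condition (i): the descending induction must be seeded correctly, and the admissibility bookkeeping inside it — although, as above, it reduces largely to failures of P\'alfy's condition and to the diameter-three bounds — has to be checked to leave no stray case. The secondary difficulty is the passage to ``$F$ minimal normal,'' which the placement of $c$ on the right-hand clique makes genuinely more delicate here than in \cite{LM}, and is the reason the hypothesis from \cite{BLL} is needed at all.
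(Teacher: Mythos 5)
Your overall architecture differs from the paper's in one crucial respect, and that difference opens a genuine gap. The paper does \emph{not} make the $a_j$ admissible: it shows the $b_i$ are strongly admissible (using the already-classified graphs $\Gamma_{k+1,k}$ and $\Gamma_{k+2,k}$ from Theorem \ref{KT}, together with diameter-three arguments from \cite{Sass}), then disposes of every $a_j$ via Lemma \ref{pi} with an admissible $b_i$ playing the role of $s$, and of $c$ via the hypothesis of \cite{BLL}. This makes Proposition \ref{n1case} self-contained modulo previously classified families, and it is the reason the paper can afterwards prove the non-occurrence of $\SRi{k}{1}{2}$ \emph{using} Proposition \ref{n1case} (note that the proof of Lemma \ref{admissiblevertices} cites $\SRi{k}{1}{2}[c_1]=\SR{k}{1}$). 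Your route instead rests on the strong admissibility of $a_2,\dots,a_k$, whose condition (i) is exactly the non-occurrence of $\SR{k}{1}[a_k]\cong\SRi{k-1}{1}{2}$ — you have correctly identified that isomorphism, but you now need the $\SRi{\cdot}{1}{\cdot}$ family \emph{before} $\SR{k}{1}$, inverting the paper's logical order.

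The companion argument you sketch to supply that input does not close. Your descending induction on the left-hand size drives $\SRi{k'}{1}{m}$ down to $\SRi{2}{1}{m'}$ and $\SRi{1}{1}{m'}$, and these base cases are not ``handled directly'' by the disconnected-graph results: for $\SRi{2}{1}{m'}$ the relevant disconnected subgraph has components of sizes $2$ and $m'+3\geq 2^2-1$, so P\'alfy's inequality from \cite{P2} is \emph{not} violated (this is precisely the obstruction that leaves $\SR{2}{1}$ and $\SR{2}{2}$ unresolved in this paper). The problem already bites at your own base case $k=3$: there condition (i) for $a_3$ requires the non-occurrence of $\SRi{2}{1}{2}$, a seven-vertex graph that lies outside every lemma in the paper (all of which require left parameter at least $3$, precisely because $\SR{2}{1}$ is unknown), so ``handled directly, as there'' conceals an unproved claim. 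Your application of Lemma \ref{lemma3} with $a=b_2$, $b=c$ and admissible vertices $a_2,a_3$ inherits the same dependence. The fix is simply to adopt the paper's division of labor: prove strong admissibility for the $b_i$ (their deleted-vertex graphs are $\Gamma_{k+1,k}$ or have diameter three, so no $\SRi{}{}{}$ input is needed), run Lemma \ref{pi} on all the $a_j$, use \cite{BLL} for $c$, and choose the vertices in Lemma \ref{lemma3} from among the admissible $b_i$ (e.g.\ $a=a_1$, $b=a_2$, $c=b_1$, $d=b_2$). The remainder of your outline — P\'alfy's condition for the full graph, the reduction to $\Phi(G)=1$, the minimal normality of $F$, and the final contradiction — matches the paper's.
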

\begin{proof}
For the sake of contradiction, suppose that there exists a solvable group $G$ with $|G|$ minimal such that $\Delta(G)=\SR{k}{1}$. Following the proof of the base case in \cite{LM}, one can show that $b_i$ is strongly admissible for all $1\leq i\leq k+1$ (using the subgraphs $\Gamma_{k+1,k}$ and $\Gamma_{k+2,k}$, or diameter three arguments from \cite{Sass}). One can further show that $a_j$ satisfies Lemma \ref{pi} for all $1\leq j\leq k$, and that the graph $\SR{k}{1}$ satisfies the technical hypothesis from \cite{BLL} under the notation for $p=c$. In total, we now have that there is no normal nonabelian Sylow $q$-subgroup for any vertex $q\in\rho(G)$.

Next, one can go through steps to conclude that the Frattini subgroup $\Phi(G)=1$. Then one gets the existence of a subgroup $H$ of $G$ such that $G=HF$ and $H\cap F=1$, where $F$ is the Fitting subgroup of $G$. One can then go through the exhaustive steps to check that $F$ is minimal normal in $G$, and that the conditions of Lemma \ref{lemma3} are satisfied. This yields the desired result.
\end{proof}

As consequence of Proposition \ref{n1case}, and needed for the family $\{\SR{k}{2}\}$, we now turn our attention to the family $\{\SRi{k}{1}{2}\}$. As always, we take $k\geq3$, but for examples of these graphs in general, one can see Figure \ref{figk1RR}.

\begin{figure}[htb]
    \centering
$
\begin{tikzpicture}[scale=2]
\node (0a) at (1.75,.75) {$c_1$};
\node (00a) at (1.75,.25) {$c_2$};
\node (1a) at (.5,.5) {$a_1$};
\node (11a) at (1.25,1) {$b_1$};
\node (22a) at (1.25,0) {$b_2$};
\path[font=\small,>=angle 90]
(0a) edge node [right] {$ $} (00a)
(0a) edge node [right] {$ $} (11a)
(0a) edge node [right] {$ $} (22a)
(00a) edge node [right] {$ $} (11a)
(00a) edge node [right] {$ $} (22a)
(11a) edge node [right] {$ $} (22a)
(11a) edge node [right] {$ $} (22a)
(1a) edge node [right] {$ $} (11a)
(1a) edge node [right] {$ $} (22a);
\node (0b) at (4.25,.75) {$c_1$};
\node (00b) at (4.25,.25) {$c_2$};
\node (1b) at (2.5,1) {$a_1$};
\node (2b) at (2.5,0) {$a_2$};
\node (33b) at (3.25,.5) {$b_3$};
\node (11b) at (3.75,1) {$b_1$};
\node (22b) at (3.75,0) {$b_2$};
\path[font=\small,>=angle 90]
(00b) edge node [right] {$ $} (0b)
(00b) edge node [right] {$ $} (11b)
(00b) edge node [right] {$ $} (22b)
(00b) edge node [right] {$ $} (33b)
(0b) edge node [right] {$ $} (11b)
(0b) edge node [above] {$ $} (22b)
(0b) edge node [above] {$ $} (33b)
(1b) edge node [above] {$ $} (2b)
(11b) edge node [above] {$ $} (22b)
(11b) edge node [above] {$ $} (33b)
(22b) edge node [above] {$ $} (33b)
(1b) edge node [above] {$ $} (11b)
(2b) edge node [above] {$ $} (22b)
(1b) edge node [above] {$ $} (33b);
\node (0c) at (7.25,.75) {$c_1$};
\node (00c) at (7.25,.25) {$c_2$};
\node (2c) at (5,1) {$a_2$};
\node (3c) at (5,0) {$a_3$};
\node (1c) at (5.5,.5) {$a_1$};
\node (11c) at (6.25,.75) {$b_1$};
\node (44c) at (6.25,.25) {$b_4$};
\node (22c) at (6.75,1) {$b_2$};
\node (33c) at (6.75,0) {$b_3$};
\path[font=\small,>=angle 90]
(00c) edge node [right] {$ $} (0c)
(00c) edge node [right] {$ $} (11c)
(00c) edge node [right] {$ $} (22c)
(00c) edge node [right] {$ $} (33c)
(00c) edge node [right] {$ $} (44c)
(0c) edge node [right] {$ $} (11c)
(0c) edge node [above] {$ $} (22c)
(0c) edge node [above] {$ $} (33c)
(0c) edge node [above] {$ $} (44c)
(1c) edge node [above] {$ $} (2c)
(1c) edge node [above] {$ $} (3c)
(2c) edge node [right] {$ $} (3c)
(11c) edge node [above] {$ $} (22c)
(11c) edge node [right] {$ $} (33c)
(11c) edge node [above] {$ $} (44c)
(22c) edge node [above] {$ $} (33c)
(22c) edge node [above] {$ $} (44c)
(33c) edge node [above] {$ $} (44c)
(1c) edge node [above] {$ $} (11c)
(2c) edge node [above] {$ $} (22c)
(3c) edge node [above] {$ $} (33c)
(1c) edge node [above] {$ $} (44c);
\node (0d) at (3.25,-.75) {$c_1$};
\node (1d) at (3.25,-1.25) {$c_2$};
\node (2d) at (.5,-.5) {$a_3$};
\node (3d) at (.5,-1.5) {$a_4$};
\node (4d) at (1,-.75) {$a_1$};
\node (5d) at (1,-1.25) {$a_2$};
\node (6d) at (2.25,-.5) {$b_3$};
\node (7d) at (2.25,-1.5) {$b_4$};
\node (8d) at (2.75,-.5) {$b_1$};
\node (9d) at (2.75,-1.5) {$b_2$};
\node (10d) at (1.75,-1) {$b_5$};
\path[font=\small,>=angle 90]
(0d) edge node [right] {$ $} (1d)
(0d) edge node [right] {$ $} (6d)
(0d) edge node [right] {$ $} (7d)
(0d) edge node [right] {$ $} (8d)
(0d) edge node [right] {$ $} (9d)
(0d) edge node [right] {$ $} (10d)
(1d) edge node [right] {$ $} (6d)
(1d) edge node [right] {$ $} (7d)
(1d) edge node [right] {$ $} (8d)
(1d) edge node [right] {$ $} (9d)
(1d) edge node [above] {$ $} (10d)
(2d) edge node [right] {$ $} (3d)
(2d) edge node [right] {$ $} (4d)
(2d) edge node [right] {$ $} (5d)
(3d) edge node [right] {$ $} (4d)
(3d) edge node [right] {$ $} (5d)
(4d) edge node [right] {$ $} (5d)
(6d) edge node [right] {$ $} (7d)
(6d) edge node [right] {$ $} (8d)
(6d) edge node [right] {$ $} (9d)
(6d) edge node [right] {$ $} (10d)
(7d) edge node [right] {$ $} (8d)
(7d) edge node [right] {$ $} (9d)
(7d) edge node [right] {$ $} (10d)
(8d) edge node [right] {$ $} (9d)
(8d) edge node [right] {$ $} (10d)
(9d) edge node [right] {$ $} (10d)
(2d) edge node [right] {$ $} (6d)
(3d) edge node [right] {$ $} (7d)
(4d) edge node [right] {$ $} (8d)
(5d) edge node [right] {$ $} (9d)
(4d) edge node [right] {$ $} (10d);
\node (0) at (7.25,-.75) {$c_1$};
\node (1) at (7.25,-1.25) {$c_2$};
\node (2) at (4,-.75) {$a_4$};
\node (3) at (4,-1.25) {$a_5$};
\node (4) at (4.5,-.5) {$a_2$};
\node (5) at (4.5,-1.5) {$a_3$};
\node (6) at (5,-1) {$a_1$};
\node (7) at (5.75,-.75) {$b_1$};
\node (8) at (5.75,-1.25) {$b_6$};
\node (11) at (6.25,-.5) {$b_2$};
\node (12) at (6.25,-1.5) {$b_3$};
\node (9) at (6.75,-.5) {$b_4$};
\node (10) at (6.75,-1.5) {$b_5$};
\path[font=\small,>=angle 90]
(0) edge node [right] {$ $} (1)
(0) edge node [right] {$ $} (7)
(0) edge node [right] {$ $} (8)
(0) edge node [right] {$ $} (9)
(0) edge node [right] {$ $} (10)
(0) edge node [right] {$ $} (11)
(0) edge node [right] {$ $} (12)
(1) edge node [right] {$ $} (7)
(1) edge node [right] {$ $} (8)
(1) edge node [right] {$ $} (9)
(1) edge node [right] {$ $} (10)
(1) edge node [right] {$ $} (11)
(1) edge node [right] {$ $} (12)
(2) edge node [right] {$ $} (3)
(2) edge node [right] {$ $} (4)
(2) edge node [right] {$ $} (5)
(2) edge node [right] {$ $} (6)
(3) edge node [right] {$ $} (4)
(3) edge node [right] {$ $} (5)
(3) edge node [right] {$ $} (6)
(4) edge node [right] {$ $} (5)
(4) edge node [right] {$ $} (6)
(5) edge node [right] {$ $} (6)
(7) edge node [right] {$ $} (8)
(7) edge node [right] {$ $} (9)
(7) edge node [right] {$ $} (10)
(7) edge node [right] {$ $} (11)
(7) edge node [right] {$ $} (12)
(8) edge node [right] {$ $} (9)
(8) edge node [right] {$ $} (10)
(8) edge node [right] {$ $} (11)
(8) edge node [right] {$ $} (12)
(9) edge node [right] {$ $} (10)
(9) edge node [right] {$ $} (11)
(9) edge node [right] {$ $} (12)
(10) edge node [right] {$ $} (11)
(10) edge node [right] {$ $} (12)
(11) edge node [right] {$ $} (12)
(2) edge node [right] {$ $} (9)
(3) edge node [right] {$ $} (10)
(4) edge node [right] {$ $} (11)
(5) edge node [right] {$ $} (12)
(6) edge node [right] {$ $} (7)
(6) edge node [right] {$ $} (8);
\end{tikzpicture}
$
    \caption{Examples of graphs in the family $\{\SRi{k}{1}{2}\}$: $1\leq k\leq5$}
    \label{figk1RR}
\end{figure}

Since this is where the argument becomes more delicate and deviates more significantly from that in \cite{LM}, here we will provide sufficient detail. The bones of the argument stay the same, however, so the process should be familiar. To wit, the following:

\begin{lemma}\label{admissiblevertices}
Let $k \geq 3$ and assume $\SRi{k}{1}{2} = \Delta(G)$ for some finite solvable $G$, where $|G|$ is minimal. Then $G$ does not have a normal nonabelian Sylow $p$-subgroup for any $p \in \{b_1, b_{k+1} \} \cup \{b_2, \ldots, b_k\} \cup \{c_1, c_2\}$. In particular, $p$ is a strongly admissible vertex.
\end{lemma}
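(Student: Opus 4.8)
The plan is to prove that each listed vertex $p$ is a \emph{strongly admissible} vertex of $\SRi{k}{1}{2}$ and then to apply Lemma~\ref{strong}. The passage from strong admissibility to the stated conclusion is immediate: since $|G|$ is minimal among solvable groups with prime character degree graph $\SRi{k}{1}{2}$, for every proper normal subgroup $N$ of $G$ the quotient $G/N$ is a strictly smaller solvable group, so $\Delta(G/N)$ --- always a subgraph of $\Delta(G)$ --- is a \emph{proper} subgraph of $\SRi{k}{1}{2}$; this is precisely the hypothesis of Lemma~\ref{strong}, which then gives that a Sylow $p$-subgroup of $G$ is not normal. So the content of the lemma is the verification of conditions (i)--(iii) of strong admissibility for $p$: that $\SRi{k}{1}{2}[p]$, every graph obtained by deleting one or more edges incident to $p$, and every graph obtained by deleting $p$ together with one or more edges joining two neighbours of $p$, is non-occurring.

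\textbf{Cases and tools.} By the evident symmetries of $\SRi{k}{1}{2}$ it suffices to treat three representatives: $p=c_1$ (the vertex $c_2$ is identical), $p=b_1$ (the vertex $b_{k+1}$ is identical), and $p=b_2$ (representing $b_2,\dots,b_k$). In each case I would run through the finitely many subgraphs above and certify non-occurrence by one of four recurring mechanisms: \textbf{(a)} the subgraph contains three pairwise non-adjacent vertices, contradicting P\'alfy's condition (Lemma~\ref{PC}); \textbf{(b)} the subgraph is $\SR{k}{1}$, or $\SR{k}{1}$ together with finitely many isolated vertices, hence non-occurring by Proposition~\ref{n1case} together with the disconnected-graph results of \cite{P2,L2}; \textbf{(c)} the subgraph is a connected subgraph, on its full vertex set, of some $\Gamma_{K,T}$ with $K\ge T\ge 3$, hence non-occurring by Theorem~\ref{KT}; or \textbf{(d)} the subgraph has diameter three and some vertex has too many neighbours, relative to the number of vertices at distance two or three from it, to satisfy the bounds of \cite{Sass} (Theorems 2 or 4).

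\textbf{Which mechanism applies where.} Deleting $c_\ell$ leaves exactly $\SR{k}{1}$, so (i) holds for $p=c_\ell$ by (b); deleting $b_1$ leaves exactly $\Gamma_{k+2,k}$ and deleting $\epsilon(a_1,b_1)$ leaves exactly $\Gamma_{k+3,k}$, while deleting edges joining two neighbours of $b_1$ leaves a connected proper subgraph of $\Gamma_{k+2,k}$ on the same vertex set or else a disconnected graph whose nontrivial component is such a subgraph of $\Gamma_{k+1,k}$, so (i)--(iii) hold for $p\in\{b_1,b_{k+1}\}$ by (b) and (c). For $p=b_i$ with $2\le i\le k$, deleting $b_i$ (or deleting $\epsilon(a_i,b_i)$) leaves $a_i$ adjacent only to the clique $A$, so $a_i$ lies at distance three from $c_1$ and from $c_2$ and mechanism (d) applies; the remaining edge deletions incident to $b_i$ produce an independent triple consisting of $b_i$, the far endpoint of a deleted edge, and a vertex of $A$ adjacent to neither (which exists because $k\ge 3$), so (a) applies; and for (iii) the permitted edge deletions all lie within the right-hand clique of $\SRi{k}{1}{2}[b_i]$ and, together with the now-pendant vertex $a_i$, again yield an independent triple, or, when they isolate a vertex, a disconnected graph covered by (a) and (b). Deleting still more edges preserves non-adjacency, so the multi-edge cases reduce to the single-edge cases.

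\textbf{The main obstacle.} The step I expect to be the genuine obstacle is mechanism (d). In \cite{LM} the analogous deletions reduced to clean $\Gamma_{K,T}$ subgraphs on the same vertex set, but here the double edge at $a_1$ (the edges $\epsilon(a_1,b_1)$ and $\epsilon(a_1,b_{k+1})$, both of which survive the deletion of $b_i$ for $2\le i\le k$) breaks that structure, so for those vertices $b_i$ one cannot appeal to Theorem~\ref{KT} and must instead carefully count the vertices at distances one, two and three from the pendant vertex $a_i$ --- and, if that count is inconclusive, from $c_1$ or $c_2$ --- and check that the inequalities of \cite{Sass} are violated. This count is tightest when $k=3$, a case that may need to be checked separately; the remaining work is bookkeeping over the subgraphs demanded by the definition of strong admissibility.
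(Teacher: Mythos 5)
Your proposal is correct and follows essentially the same route as the paper: the same three representatives $b_1$, $b_2$, $c_1$, the same key identifications ($\SRi{k}{1}{2}[c_1]=\SR{k}{1}$, $\SRi{k}{1}{2}[b_1]=\Gamma_{k+2,k}$, $\SRi{k}{1}{2}[\epsilon(a_1,b_1)]=\Gamma_{k+3,k}$), and the same toolkit of P\'alfy triples, Theorem~\ref{KT}, Proposition~\ref{n1case}, and the diameter-three bounds of \cite{Sass}. The only divergence is cosmetic: for $\SRi{k}{1}{2}[b_1,\epsilon(a_1,b_{k+1})]$ you invoke the proper-connected-subgraph clause of Theorem~\ref{KT} where the paper uses a diameter-three argument, and both work.
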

\begin{proof}
The set of vertices $\{b_1, b_2, \ldots, b_{k+1}, c_1, c_2\}$ has been partitioned as above to indicate which vertices share (with appropriate relabeling) identical proofs of admissibility. Therefore, it suffices to explicitly illustrate strong admissibility for the representatives $p = b_1$, $p = b_2$, and $p = c_1$. 


We first consider admissibility of $b_1$. Now, $\SRi{k}{1}{2}[b_1]$ is isomorphic to $\Gamma_{k+2,k}$ from \cite{BissLaub}, and hence is non-occurring (see Theorem \ref{KT}). Next, $\SRi{k}{1}{2}[\epsilon(a_1,b_1)]$ is $\Gamma_{k+3,k}$, so again by Theorem \ref{KT} is non-occurring. It is easy to see that $\SRi{k}{1}{2}[\epsilon(b_1,b_i)]$ (where $2 \leq i \leq k+1$), $\SRi{k}{1}{2}[\epsilon(b_1,c_1)]$, and $\SRi{k}{1}{2}[\epsilon(b_1,c_2)]$ all violate P\'{a}lfy's condition. Therefore, $b_1$ is admissible. For strong admissibility, we consider $\SRi{k}{1}{2}[b_1]$ and removal of one or more edges between two vertices adjacent to $b_1$. 
Note that $\SRi{k}{1}{2}[b_1,\epsilon(a_1,b_{k+1})]$ has diameter three, and violates the main result from \cite{Sass}. Furthermore, we obtain an odd-length cycle in the complement graph if we delete an edge with both vertices in $\{b_2, \ldots, b_k, b_{k+1}, c_1, c_2\}$. Setting $c_1 = b_{k+2}$ and $c_2 = b_{k+3}$ for convenience of notation, removing an edge joining $b_i$ and $b_j$ (with $2\leq i<j\leq k+3$) gives $b_i, b_j,$ and $a_l$ having no common incident edges, where $l$ is chosen to satisfy $1 \leq l \leq k$, $l \neq i$, and $l \neq j$. Hence, $b_1$ is strongly admissible.

For admissibility of $b_2$, we now consider the $\SRi{k}{1}{2}[b_2]$, which has diameter three. Then, in the parlance of the main result from \cite{Sass}, we conclude the graph is non-occurring. Next, we consider retaining $b_2$ but deleting one or more of its incident edges. The graph $\SRi{k}{1}{2}[\epsilon(b_2,a_2)]$ has diameter three and is again non-occurring. Observe that $\SRi{k}{1}{2}[\epsilon(b_2,b_i)]$ (where $1 \leq i \leq k+1$ and $i \neq 2$) violates P\'{a}lfy's condition, as do $\SRi{k}{1}{2}[\epsilon(b_2,c_1)]$ and $\SRi{k}{1}{2}[\epsilon(b_2,c_2)]$. Thus, $b_2$ is admissible. To see that $b_2$ is strongly admissible, observe that the only adjacent vertices that are both mutually adjacent to $b_2$ are from the set $\{b_1, b_3, \ldots, b_{k+1}, c_1, c_2\}$. As such, one may follow the argument above for the similar case showing $b_1$ is strongly admissible.

Finally, we will illustrate that $c_1$ is strongly admissible. To that end, we first observe that $\SRi{k}{1}{2}[c_1]$ is $\SR{k}{1}$, and hence is non-occurring by Proposition \ref{n1case}. Furthermore, $\SRi{k}{1}{2}[\epsilon(c_1,b_i)]$ (for $1 \leq i \leq k+1$) violates P\'{a}lfy's condition, as does $\SRi{k}{1}{2}[\epsilon(c_1,c_2)]$, so $c_1$ is admissible. As was the case with $b_2$, the only adjacent vertices both mutually adjacent to $c_1$ lie in the right subgraph; hence, one may follow the same argument as was used with the strong admissibility of $b_1$ and $b_2$.
\end{proof}

As consequence of Lemma \ref{admissiblevertices}, the following lemma specifically investigates proper connected subgraphs of $\SRi{k}{1}{2}$.

\begin{lemma}\label{subgraphpart}
Let $k\geq3$ and consider the graph $\SRi{k}{1}{2}$. Letting $p=c_1$, we have the corresponding set of adjacent vertices $\pi=\{b_1,\ldots,b_{k+1},c_2\}$ and the set of nonadjacent vertices $\rho=\{a_1,\ldots,a_k\}$. Then letting $\pi^*$ be any nonempty subset of $\pi$, we have that no proper connected subgraph with vertex set $\{p\}\cup\pi^*\cup\rho$ occurs as the prime character degree graph of any solvable group.
\end{lemma}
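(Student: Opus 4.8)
The plan is to argue by contradiction. Suppose $\Lambda$ is a connected proper subgraph of $\SRi{k}{1}{2}$ with vertex set $V=\{c_1\}\cup\pi^*\cup\rho$, and that $\Lambda=\Delta(H)$ for some solvable group $H$. Write $B^*=\pi^*\cap\{b_1,\ldots,b_{k+1}\}$, put $Q=\{c_1\}\cup\pi^*$ and $m=|Q|=1+|\pi^*|$, and call the edges $\epsilon(a_i,b_i)$ (for $b_i\in B^*$, $i\le k$) together with $\epsilon(a_1,b_{k+1})$ (if $b_{k+1}\in B^*$) the \emph{bridging edges}. There are exactly $|B^*|$ bridging edges, and in $\SRi{k}{1}{2}$ they are the only edges joining a vertex of $\rho$ to a vertex of $Q$.

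The first step is to pin down the shape of $\Lambda$ using P\'alfy's condition. Applying Lemma \ref{PC} to the triples $\{c_1,a_i,a_j\}$ shows that every edge among the vertices of $\rho$ lies in $\Lambda$, so $\rho$ induces a $K_k$; applying it to the triples $\{c_1,b_i,a_\ell\}$, $\{c_2,b_i,a_\ell\}$, $\{c_1,c_2,a_\ell\}$ and $\{b_i,b_j,a_\ell\}$ — where in each case $a_\ell$ is chosen nonadjacent in $\SRi{k}{1}{2}$ to the $b$-vertices appearing, which is possible since each $b$ has at most one neighbor in $\rho$ and $k\ge3$ — shows every edge among the vertices of $Q$ lies in $\Lambda$, so $Q$ induces a $K_m$. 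Since the only remaining edges available to $\Lambda$ are bridging edges, $E(\Lambda)$ is the union of the edges of the $K_k$ on $\rho$, the edges of the $K_m$ on $Q$, and a set $S$ of bridging edges. Connectedness of $\Lambda$ forces $S\neq\varnothing$, since bridging edges are the only edges leaving the clique $\rho$.

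The argument then divides according to whether every vertex of $\rho$ is incident to an edge of $S$. Suppose first that some $a_j\in\rho$ is not; then all neighbors of $a_j$ in $\Lambda$ lie in the clique $\rho$, and since no $a_i$ is adjacent to $c_1$, every $a_j$--$c_1$ path has length at least three while a path $a_j-a_i-b_i-c_1$ of length three does exist (taking any $a_i$ incident to an edge of $S$), so $d_{\Lambda}(a_j,c_1)=3$ and $\Lambda$ has diameter three. As both $a_j$ and $c_1$ have a vertex at distance three, the exponential bound of \cite{Sass} applies at each: at $a_j$ it relates the $k-1$ vertices adjacent to $a_j$ to the $m$ vertices at distance two or three from it, and at $c_1$ it relates the $m-1$ vertices adjacent to $c_1$ to the $k$ vertices at distance two or three from it. The two resulting inequalities force $m$ to be exponentially large in $k$ and simultaneously $k$ exponentially large in $m$, which is impossible for $k\ge3$; this is the contradiction. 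I expect this case to be the main obstacle, precisely because a single application of the \cite{Sass} bound need not suffice when $k$ or $|\pi^*|$ is small, so one must apply it at both ends of a distance-three pair and combine.

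Now suppose instead that every vertex of $\rho$ is incident to an edge of $S$. For $2\le i\le k$ this forces $b_i\in B^*$ with $\epsilon(a_i,b_i)\in S$, and for $a_1$ it forces $b_1\in B^*$ or $b_{k+1}\in B^*$ with the corresponding edge in $S$; a short count then shows that $S$ is a matching saturating $\rho$, that $|B^*|\in\{k,k+1\}$, and that $\Lambda$ is one of: $\SRi{k}{1}{2}$ itself (excluded, since $\Lambda$ is a proper subgraph), the graph $\SR{k}{1}$ (which arises when $B^*=\{b_1,\ldots,b_{k+1}\}$ and $c_2\notin\pi^*$), or a graph $\Gamma_{k',k}$ with $k'\in\{k+1,k+2,k+3\}$. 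In the second case Proposition \ref{n1case} gives a contradiction, and in the third Theorem \ref{KT} does, since $k'>k\ge3$ makes $\Gamma_{k',k}$ non-occurring. These cases being exhaustive, no such $\Lambda$ occurs. The bookkeeping in this last case is routine but must be carried out carefully according to which of $b_1$, $b_{k+1}$, $c_2$ lie in $\pi^*$.
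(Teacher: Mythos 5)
Your opening reduction is sound, and is in some ways cleaner than the paper's: Lemma \ref{PC} really does force every non-bridging edge of $\SRi{k}{1}{2}$ to survive in $\Lambda$, so $\Lambda$ is determined by its set $S$ of bridging edges, and your saturated case (every vertex of $\rho$ meets an edge of $S$) is enumerated correctly and closed by Proposition \ref{n1case} and Theorem \ref{KT}. The genuine gap is in the unsaturated case. There you assert that applying the diameter-three bound of \cite{Sass} at both $a_j$ and $c_1$ forces $m$ exponentially large in $k$ and $k$ exponentially large in $m$, hence a contradiction for $k\geq3$; but the two inequalities have the shape (number of vertices at distance two or three) $\geq 2^{(\text{number of neighbors})}-1$, and when $k=3$ and $m=3$ they read $3\geq 2^{2}-1=3$ at \emph{both} ends, so neither is violated. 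This parameter choice is realized: take $\pi^*=\{b_1,b_2\}$ with $S=\{\epsilon(a_1,b_1),\epsilon(a_2,b_2)\}$, leaving $a_3$ uncovered. That graph is indeed non-occurring, but the reason is that it is a connected proper subgraph of $\Gamma_{3,3}$ with the same vertex set, i.e., the second clause of Theorem \ref{KT} -- a tool your Case 1 never invokes.

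The repair, which is essentially the route the paper takes, is to notice that the diameter-three machinery is only genuinely needed when $a_1$ retains both $\epsilon(a_1,b_1)$ and $\epsilon(a_1,b_{k+1})$ (so that $S$ is not a partial matching): whenever $a_1$ carries at most one bridging edge, the unsaturated graph embeds as a proper subgraph of some $\Gamma_{k',t}$ with $t\geq2$ (or of $\SR{k}{1}$ on the appropriate vertex set) and is dispatched by Theorem \ref{KT} or the subgraph result for $\SR{k}{1}$. The paper reserves the \cite{Sass} argument for the single configuration $\pi^*=\{b_1,b_{k+1},c_2\}$ with both $a_1$-edges present, where the relevant counts can be checked to violate the bound at one of the two ends for every $k\geq3$. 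As written, your Case 1 both overreaches (claiming a contradiction the stated inequalities do not deliver) and omits the combinatorial fallback needed for the small cases you yourself flag as the main obstacle.
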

\begin{proof}
First we notice that if $c_2\notin\pi^*$, then the resulting vertex set of $\{p\}\cup\pi^*\cup\rho$ will generate a subgraph of $\SR{k}{1}$, and it has already been established that no proper connected subgraph of $\SR{k}{1}$ with its corresponding vertex set $\{p\}\cup\pi^*\cup\rho$ occurs as the prime character degree graph of any solvable group. Hence, when considering subsets of $\pi$, we can now assume that $c_2\in\pi^*$.

Moreover, if $b_1\notin\pi^*$ or $b_{k+1}\notin\pi^*$, then the resulting graph generated by the vertex set $\{p\}\cup\pi^*\cup\rho$ is a subgraph of $\Gamma_{k,|\{p\}\cup\pi^*|}$, which we know does not occur as the prime character degree graph of any solvable group due to Theorem \ref{KT} (since $k\geq3$ and $|\{p\}\cup\{\pi^*\}|\geq2$, having already deduced that $c_2\in\pi^*$). Hence, we may assume that $b_1,b_{k+1}\in\pi^*$.

Finally, now knowing that $b_1,b_{k+1},c_2\in\pi^*$, we can then investigate the following cases, noting that the remaining vertices of $b_2,\ldots,b_k$ are all symmetric. Therefore, without loss of generality, we set the notation $\pi_i^*=\{b_2,\ldots,b_i\}$ for $2\leq i\leq k$ and consider the following possibilities for subsets of $\pi$: (a) $\{b_1,b_{k+1},c_2\}$, (b) $\{b_1,b_{k+1},c_2\}\cup\pi_i^*$, and (c) $\pi$.

For (a), we need only consider the case where we have both the edge from $a_1$ to $b_1$ and the edge from $a_1$ to $b_{k+1}$, for otherwise, if we did not have one of those edges, we can once again invoke Theorem \ref{KT}. Observe that the resulting graph has diameter three and violates the main result from \cite{Sass}.

For (b), notice that we need only consider the case where $2\leq i\leq k-1$ since the case $i=k$ is exactly (c) below. Note that this is once again a subgraph of $\SR{k}{1}$ and has already been shown not to occur.

For (c), it once again suffices to only consider losing the edge between $a_1$ and $b_1$ or the edge between $a_1$ and $b_{k+1}$. The resulting graph is a subgraph of $\Gamma_{k+3,k}$, which is handled by Theorem \ref{KT}.

Hence, no proper connected subgraph of $\SRi{k}{1}{2}$ with vertex set $\{p\}\cup\pi^*\cup\rho$ occurs as the prime character degree graph of any solvable group.
\end{proof}

Turning our attention to the remaining vertices, we have the following:

\begin{lemma}\label{pivertices}
Let $k \geq 3$ and assume $\SRi{k}{1}{2} = \Delta(G)$ for some finite solvable group $G$ with $|G|$ minimal. Then $G$ does not have a normal nonabelian Sylow $q$-subgroup for any $q \in \{a_1\} \cup \{a_2, \ldots, a_k\}$.
\end{lemma}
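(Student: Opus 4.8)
The plan is to verify the hypotheses of Lemma \ref{pi} for each vertex of $\{a_1\}\cup\{a_2,\ldots,a_k\}$, drawing on the strong admissibility of the vertices $b_1,\ldots,b_{k+1},c_1,c_2$ established in Lemma \ref{admissiblevertices}. First I would record the ambient hypotheses: since $\SRi{k}{1}{2}=\Delta(G)$, the graph satisfies P\'alfy's condition by Lemma \ref{PC}; and since $|G|$ is minimal, for every proper normal subgroup $N$ of $G$ the graph $\Delta(N)$ is a proper subgraph of $\Delta(G)$ (recall that $\Delta(N)$ is always a subgraph of $\Delta(G)$ when $N\trianglelefteq G$, by Clifford's theorem, and equality would contradict the minimality of $|G|$ since a subgroup of a solvable group is solvable). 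Thus for each choice of $q$ it remains only to exhibit a partition $\pi=\pi_1\sqcup\pi_2$ of the neighbors of $q$, a vertex $v\in\pi_2$ adjacent to an admissible vertex $s$ among the non-neighbors $\rho$ of $q$, and a further non-neighbor $w\in\rho$ not adjacent to $v$.

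For $q=a_1$, the neighbors are $\pi=\{a_2,\ldots,a_k\}\cup\{b_1,b_{k+1}\}$ and the non-neighbors are $\rho=\{b_2,\ldots,b_k\}\cup\{c_1,c_2\}$. I would take $\pi_1=\{b_1,b_{k+1}\}$ and $\pi_2=\{a_2,\ldots,a_k\}$; both are nonempty (using $k\geq3$), and there is no edge between them since $b_1$ and $b_{k+1}$ meet the left subgraph $A$ only at $a_1$. Then $v=a_2\in\pi_2$ is adjacent to $b_2\in\rho$, which is strongly admissible by Lemma \ref{admissiblevertices}, so we set $s=b_2$; and $w=c_1\in\rho$ is not adjacent to $v=a_2$, because $c_1,c_2$ are adjacent only to vertices of $B$. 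Lemma \ref{pi} then forces a Sylow $a_1$-subgroup of $G$ to be not normal.

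For $q=a_j$ with $2\leq j\leq k$, the vertices $a_2,\ldots,a_k$ (with their partners $b_2,\ldots,b_k$) are carried to one another by automorphisms of $\SRi{k}{1}{2}$, so it suffices to treat $q=a_2$. Here $\pi=\{a_1,a_3,\ldots,a_k\}\cup\{b_2\}$ and $\rho=\{b_1,b_3,\ldots,b_{k+1}\}\cup\{c_1,c_2\}$. I would take $\pi_1=\{b_2\}$ and $\pi_2=\{a_1,a_3,\ldots,a_k\}$ (nonempty as $k\geq3$), with no edge between them since $b_2$ meets $A$ only at $a_2$. Then $v=a_1\in\pi_2$ is adjacent to the strongly admissible vertex $b_1\in\rho$ (take $s=b_1$), and $w=c_1\in\rho$ is not adjacent to $v=a_1$. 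Lemma \ref{pi} gives that a Sylow $a_2$-subgroup, and hence a Sylow $a_j$-subgroup for every $2\leq j\leq k$, is not normal. Combining the two cases, $G$ has no normal nonabelian Sylow $q$-subgroup for any $q\in\{a_1\}\cup\{a_2,\ldots,a_k\}$.

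The one delicate point I expect to require care is the choice of the partition $\pi=\pi_1\sqcup\pi_2$: the seemingly natural split that lumps the (unique) $B$-neighbor of the chosen $a$-vertex in with the other $a$-vertices does not work, since that $B$-vertex is adjacent to \emph{every} vertex of $\rho$, leaving no candidate for $w$. Isolating the $B$-neighbor(s) in $\pi_1$ and drawing $v$ from the $A$-side -- whose vertices are never adjacent to $c_1$ -- resolves this. The remaining verifications are routine checks against the edge list of $\SRi{k}{1}{2}$ and the admissibility conclusions of Lemma \ref{admissiblevertices}.
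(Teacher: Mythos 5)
Your proposal is correct and follows essentially the same route as the paper: both cases reduce by symmetry to $q=a_1$ and $q=a_2$, and you make the identical choices of $\pi_1$, $\pi_2$, $v$, $s$, and $w$ before invoking Lemma \ref{pi}. The extra remarks on why the hypotheses of Lemma \ref{pi} hold and why the $B$-neighbors must be isolated in $\pi_1$ are sound but not needed beyond what the paper records.
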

\begin{proof}
By symmetry, it suffices to illustrate the result for only $q=a_1$ and $q=a_2$. For $q=a_1$ and following the notation laid out in Lemma \ref{pi}, we have $\pi = \{b_1, b_{k+1}, a_2, \ldots, a_k\}$ and $\rho = \{b_2, \ldots, b_k,c_1,c_2\}$. Setting $\pi_1 = \{b_1,b_{k+1}\}$ and $\pi_2 = \{a_2, \ldots, a_k\}$, we may choose $v = a_2$, which is adjacent to $s = b_2$ in $\rho$, shown to be admissible in Lemma \ref{admissiblevertices}. We may also note that $w = c_1 \in \rho$ is not adjacent to $v$, hence the result follows by Lemma \ref{pi}.

Similarly, for $q=a_2$ we have $\pi = \{b_2, a_1, a_3, \ldots, a_k\}$ and $\rho = \{b_1, b_3, \ldots, b_{k+1},c_1,c_2\}$. Setting $\pi_1 = \{b_2\}$ and $\pi_2 = \{a_1, a_3, \ldots, a_k\}$, we take $v = a_1$, $s = b_1$, and $w = c_1$, whence the result via Lemma \ref{pi}.
\end{proof}

Under the assumption that there exists some solvable group $G$ with $|G|$ minimal such that $\Delta(G)=\SRi{k}{1}{2}$, we now have that there is no normal nonabelian Sylow $q$-subgroup for every $q\in\rho(G)$ and for all $k\geq3$. Letting $F$ be the Fitting subgroup of $G$, we note that $\rho(G)=\pi(|G:F|)$. Thus, $\rho(G)=\rho(G/\Phi(G))$, where $\Phi(G)$ is the Frattini subgroup of $G$. But we note that Lemma \ref{subgraphpart} verifies there is no proper connected subgraph of $\Delta(G)$ that occurs as the prime character degree graph of a solvable group. Furthermore, we notice that the disconnected subgraph with the same vertex set does not occur, since it violates P\'alfy's inequality from \cite{P2}, where the two components are of size $a=k$ and $b=k+3$ where $a\leq b$, yet $b=k+3<2^k-1=2^a-1$. Finally, since $|G|$ is minimal, we are forced to conclude that $\Phi(G)=1$. We can then apply Lemma III 4.4 from \cite{H} to get the existence of a subgroup $H$ of $G$ such that $G=HF$ and $H\cap F=1$.

\begin{lemma}\label{minimalnormal}
Let $k\geq3$ and assume $\SRi{k}{1}{2}=\Delta(G)$ for some finite solvable group $G$, where $|G|$ is minimal. Then the Fitting subgroup of $G$, denoted $F$, is minimal normal in $G$.
\end{lemma}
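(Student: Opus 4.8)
The plan is to follow the now-standard ``Fitting subgroup'' argument from \cite{LM} and \cite{BL}, exploiting that we have already established (via Lemmas \ref{admissiblevertices} and \ref{pivertices}) that $G$ has no normal nonabelian Sylow subgroup for any prime in $\rho(G)$, together with $\Phi(G) = 1$ and the complement decomposition $G = HF$ with $H \cap F = 1$. First I would recall that since $G$ is solvable and $\Phi(G) = 1$, the Fitting subgroup $F$ is a direct product of minimal normal subgroups of $G$, each of which is an elementary abelian $p$-group for some prime $p$, and $C_G(F) = F$. Writing $F = V_1 \times \cdots \times V_r$ with each $V_i$ minimal normal, the goal is to show $r = 1$.

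The heart of the argument is to suppose for contradiction that $r \geq 2$ and derive a contradiction with the vertex set of $\Delta(G) = \SRi{k}{1}{2}$. First I would observe that for each $i$, $G/C_G(V_i)$ acts faithfully and irreducibly on $V_i$, and that $C_G(V_i) \supseteq \prod_{j \neq i} V_j$, so $G/C_G(V_i)$ is a proper quotient of $G$ isomorphic to a group whose Fitting subgroup is $V_i$ (minimal normal in the quotient). Using the standard fact (Gallagher/Isaacs, as invoked in \cite{LM}) that $\cd(G)$ contains $\cd(G/C_G(V_i)) = \cd(H C_G(V_i)/C_G(V_i))$ and more importantly that for each $i$ there is a character of $G$ whose degree is divisible by the primes dividing $|H : C_H(V_i)|$, one builds up $\rho(G)$ as controlled by the actions on the $V_i$. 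The key structural input: since no Sylow subgroup of $G$ is normal nonabelian, each prime $q \in \rho(G)$ must actually appear in the action of $H$ on $F$ in an essential way, and one leverages the precise adjacency structure of $\SRi{k}{1}{2}$ — in particular, the fact that $c_1, c_2$ are adjacent to all of $\rho(B)$ but to nothing in $\rho(A)$, and that $\rho(A)$ induces a complete graph of size $k \geq 3$ — to force a contradiction, typically by showing that some vertex would need to be adjacent to a vertex it is not adjacent to, or that $\Delta(G/V_i)$ or $\Delta(V_i)$ would be forced to be a connected subgraph on the full vertex set, contradicting Lemma \ref{subgraphpart} (or the disconnected-case violation of P\'alfy's inequality already noted).

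Concretely, I expect the argument to split on whether some $V_i$ is central in $H$ (i.e. $[H, V_i] = 1$), which is impossible since then $V_i \leq Z(G)$ would contradict $C_G(F) = F$ being self-centralizing with $r \geq 2$; and otherwise each $V_i$ contributes genuine primes to $\rho(G)$, at which point counting the vertices and comparing with the diameter-two ``core'' $\rho(A) \cup \rho(B) \cup \{c_1,c_2\}$ and the edges among them yields the contradiction. The main obstacle will be the bookkeeping in this last step: one must carefully track which primes of $\rho(G)$ can divide $|H : C_H(V_i)|$ versus which must divide $|F|$, and argue that a nontrivial decomposition $F = V_1 \times \cdots \times V_r$ cannot reproduce exactly the graph $\SRi{k}{1}{2}$ — most likely by showing it would force either a proper connected subgraph on the same vertex set to occur (ruled out by Lemma \ref{subgraphpart}) or the disconnected subgraph to occur (ruled out by P\'alfy's inequality as recorded just before this lemma). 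I would model the casework closely on the corresponding lemma in \cite{LM}, adapting the vertex labels to account for the extra special vertex $c_2$, and flag that the presence of two special vertices $c_1, c_2$ (rather than one) is exactly what makes the subgraph obstructions of Lemma \ref{subgraphpart} strong enough to close the argument.
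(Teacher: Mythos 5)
Your overall framework (decompose $F$, pass to proper quotients, and use Lemma \ref{subgraphpart}, admissibility, and P\'alfy's inequality to rule out every possible degree graph of those quotients) matches the paper's strategy in spirit, but the proposal stops short of the two steps that actually make the argument run. First, the paper does not decompose $F$ into all of its minimal normal constituents and analyze the actions on each $V_i$; it takes an arbitrary normal $N$ with $1<N<F$, produces a normal complement $M$ with $F=N\times M$ (Lemma III 4.5 of \cite{H}), and then establishes the identity $\rho(G)=\rho(G/N)\cup\rho(G/M)$ by a formation argument: if $q$ were missing from both $\rho(G/N)$ and $\rho(G/M)$, both quotients would have a normal abelian Sylow $q$-subgroup, and since that class is a formation so would $G=G/(N\cap M)$, contradicting $q\in\rho(G)$. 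Combined with the standard fact that $\rho(G)\setminus(\rho(G/N)\cap\rho(G/M))$ must lie in a complete subgraph of $\Delta(G)$, this is what generates the finite case list: that set must sit inside one of the maximal cliques $\{c_1\}\cup\{b_1,\dots,b_{k+1},c_2\}$, $\{a_i,b_i\}$ for $2\le i\le k$, $\{a_1,b_1,b_{k+1}\}$, or $\{a_1,\dots,a_k\}$. Your proposal never identifies this clique constraint, and without it there is no case division to carry out.

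Second, your proposed contradiction mechanisms (``some vertex would need to be adjacent to a vertex it is not adjacent to,'' or a vertex count against the core) are not what closes the cases. In the first three cases the point is that $\rho(G/N)$ is forced to have the form $\{c_1\}\cup\pi^*\cup\{a_1,\dots,a_k\}$; Lemma \ref{subgraphpart} kills every connected possibility, so $\Delta(G/N)$ must be disconnected, and then Theorem 5.5 of \cite{L2} gives $G/N$ a central Sylow $b$-subgroup for some $b$ among the $b_i$ or $c_j$, whence $O^b(G)<G$ --- contradicting the admissibility of $b$ from Lemma \ref{admissiblevertices}. The remaining case, where the symmetric difference lies in $\{a_1,\dots,a_k\}$, requires a different argument entirely (the Hall subgroup $E$ of $H$ for the primes $a_1,\dots,a_k$, a character $\chi$ whose degree is divisible by all of those primes, and Gallagher's theorem to force $G/FE$ abelian and again $O^b(G)<G$); this case is absent from your sketch. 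So what you have is a correct plan of attack rather than a proof: the formation identity, the clique constraint, and the case-by-case closing moves all still need to be supplied.
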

\begin{proof}
We prescribe to the same notation as in Lemma \ref{subgraphpart}. That is, set $p=c_1$, and then we have that $\pi=\{b_1,\ldots,b_{k+1},c_2\}$ and $\rho=\{a_1,\ldots,a_k\}$. We let $F$ be the Fitting subgroup of $G$, and defining $H$ as above, we set $E$ to be the Fitting subgroup of $H$.

Next, we proceed by contradiction. Therefore, we suppose there exists a normal subgroup $N$ of $G$ so that $1<N<F$. Notice that by Lemma III 4.5 of \cite{H} we know there exists some normal subgroup $M$ of $G$ such that $F=N\times M$. Furthermore, observe that $\rho(G/N)\subset\rho(G)$ and $\rho(G/M)\subset\rho(G)$ since $N$ and $M$ are both nontrivial. For any $q\in\rho(G)\setminus\rho(G/N)$, it is known that $G/N$ must have a normal nonabelian Sylow $q$-subgroup, the class of which is a formation, and so it is forced that $q\in\rho(G/M)$. Hence, we get that $\rho(G)=\rho(G/N)\cup\rho(G/M)$.

One can follow the arguments identical to that in \cite{BL}, \cite{LM}, or \cite{LD3} to conclude that $\rho(G)\setminus(\rho(G/N)\cap\rho(G/M))$ must lie in a complete subgraph of $\Delta(G)$. Therefore, $\rho(G)\setminus(\rho(G/N)\cap\rho(G/M))$ must lie in one of the following sets: (a) $\{p\}\cup\pi$, (b) $\{a_i,b_i\}$ for $2\leq i\leq k$, (c) $\{a_1,b_1,b_{k+1}\}$, or (d) $\rho$.

Suppose (a) occurs; that is, suppose $\rho(G)\setminus(\rho(G/N)\cap\rho(G/M))\subseteq\{p\}\cup\pi$. This implies that $\rho\subseteq\rho(G/N)\cap\rho(G/M)$. Since $\rho(G)=\rho(G/N)\cup\rho(G/M)$, we get that $p\in\rho(G/N)$ or $p\in\rho(G/M)$. Without loss of generality, suppose $p\in\rho(G/N)$. Therefore, we now have that $\{p\}\cup\rho\subseteq\rho(G/N)$. There are then two cases to consider for the set $\rho(G/N)$: either (i) $\rho(G/N)=\{p\}\cup\rho$, or (ii) $\rho(G/N)=\{p\}\cup\pi^*\cup\rho$ where $\pi^*$ is a nonempty proper subset of $\pi$. In case (i), the only possible graph that can arise is the disconnected graph with components $\{p\}$ and $\rho$ as the vertex sets. Using Theorem 5.5 from \cite{L2}, we get that $G/N$ has a central Sylow $b$-subgroup for some $b\in\pi$. As a consequence, we then get that $O^b(G)<G$, but this is a contradiction since all the vertices in $\pi$ are admissible by Lemma \ref{admissiblevertices}. In case (ii), we know that no connected graph with vertex set $\{p\}\cup\pi^*\cup\rho$ occurs as the prime character degree graph of any solvable group by Lemma \ref{subgraphpart}. Once again, the only option is for the disconnected subgraph to occur, leaving $G/N$ with a central Sylow $b$-subgroup with $b\in\pi\setminus\pi^*$. As above, this implies $O^b(G)<G$, a contradiction since $b$ is admissible.

Suppose (b) occurs; that is, suppose $\rho(G)\setminus(\rho(G/N)\cap\rho(G/M))\subseteq\{a_i,b_i\}$. This implies that $\rho(G)\setminus\{a_i,b_i\}\subseteq\rho(G/N)\cap\rho(G/M)$. Without loss of generality, let $a_i\in\rho(G/N)$. Since $\rho(G/N)$ and $\rho(G/M)$ are both proper in $\rho(G)$, we know that $b_i\notin\rho(G/N)$. Specifically, we get that $\rho(G/N)=\rho(G)\setminus\{b_i\}=\{p\}\cup\pi^*\cup\rho$ for the particular $\pi^*=\pi\setminus\{b_i\}$. Once again, no proper connected subgraph with the vertex set $\{p\}\cup\pi^*\cup\rho$ occurs, and therefore the only option is for the disconnected subgraph to occur. Again by Theorem 5.5 in \cite{L2}, we have that $G/N$ must have a central Sylow $b_i$-subgroup, giving us that $O^{b_i}(G)<G$. This is a contradiction since $O^{b_i}(G)=G$ because $b_i$ is admissible by Lemma \ref{admissiblevertices}.

Suppose (c) occurs; that is, suppose $\rho(G)\setminus(\rho(G/N)\cap\rho(G/M))\subseteq\{a_1,b_1,b_{k+1}\}$. This implies that $\rho(G)\setminus\{a_1,b_1,b_{k+1}\}\subseteq\rho(G/N)\cap\rho(G/M)$. Without loss of generality, we can suppose that $a_1\in\rho(G/N)$, which consequently gives three cases for $\rho(G/N)$: either (i) $\rho(G/N)=\rho(G)\setminus\{b_1,b_{k+1}\}$, or (ii) $\rho(G/N)=\rho(G)\setminus\{b_1\}$, or (iii) $\rho(G/N)=\rho(G)\setminus\{b_{k+1}\}$. In case (i), we know that no proper connected subgraph with this vertex set occurs as the prime character degree graph of any solvable group since $\rho(G)\setminus\{b_1,b_{k+1}\}=\{p\}\cup\pi^*\cup\rho$ for the particular $\pi^*=\pi\setminus\{b_1,b_{k+1}\}$. Therefore, following a similar argument as to the above, the disconnected subgraph must occur, which yields a central Sylow $b_i$-subgroup in $G/N$ and thus $O^{b_i}(G)<G$ for some $i=1$ or $i=k+1$, a contradiction since $b_i$ has been shown to be admissible. In case (ii), we know no proper connected subgraphs can occur since $\rho(G)\setminus\{b_1\}=\{p\}\cup\pi^*\cup\rho$ for the particular $\pi^*=\pi\setminus\{b_1\}$, and therefore the disconnected subgraph must occur. Following our usual argument, we get that $G/N$ must have a central Sylow $b_1$-subgroup by way of Theorem 5.5 from \cite{L2}, and so $O^{b_1}(G)<G$, a contradiction since $b_1$ has been shown to be admissible. In case (iii), we can form an identical argument as to that done in case (ii) since the vertices $b_1$ and $b_{k+1}$ are symmetric.

Suppose (d) occurs; that is, suppose $\rho(G)\setminus(\rho(G/N)\cap\rho(G/M))\subseteq\rho$. We can follow the argument given in \cite{LM} or \cite{Lewis}. Observe that $E$ has a Hall $\rho$-subgroup of $H$, and that $|E|$ is divisible by only those primes in $\rho$. Therefore, $E$ is the Hall $\rho$-subgroup of $H$. Next, there exists a character $\chi\in\Irr(G)$ such that all the primes in $\rho$ divide $\chi(1)$. Taking $\theta$ as an irreducible constituent of $\chi_{FE}$, we note that $\chi(1)/\theta(1)$ divides $|G:FE|$ and that $\chi(1)$ is relatively prime to $|G:FE|$. This forces $\chi_{FE}=\theta$. Next, the only possible divisors of $\cd(G/FE)$ are those primes in $\{p\}\cup\pi$, and we can apply Gallagher's Theorem to get that $\cd(G/FE)=\{1\}$, and therefore $G/FE$ is abelian. We now have that $O^b(G)<G$ for some $b\in\pi$, which is a contradiction since all $b\in\pi$ are admissible by Lemma \ref{admissiblevertices} and therefore $O^b(G)=G$.

Hence, no such $N$ can occur, and we get our desired conclusion that the Fitting subgroup $F$ is minimal normal in $G$.
\end{proof}

Now that we have concluded that the Fitting subgroup is minimal normal, we are now ready to apply Lemma \ref{lemma3}.

\begin{lemma}\label{finalpart}
Let $k\geq3$. The graph $\SRi{k}{1}{2}$ is not the prime character degree graph of any solvable group.
\end{lemma}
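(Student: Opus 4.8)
The plan is to collect the structural facts established in Lemmas \ref{admissiblevertices} through \ref{minimalnormal} and then apply Lemma \ref{lemma3} to the graph $\SRi{k}{1}{2}$. I would begin by contradiction: assume $\SRi{k}{1}{2} = \Delta(G)$ for some solvable group $G$ with $|G|$ minimal subject to this. For every proper normal subgroup $N$, the graph $\Delta(N)$ is a subgraph of $\Delta(G)$ (since each $\theta\in\Irr(N)$ has $\theta(1)$ dividing $\chi(1)$ for some $\chi\in\Irr(G)$ over $\theta$) and $\Delta(G/N)$ is a subgraph of $\Delta(G)$ (since $\cd(G/N)\subseteq\cd(G)$); as $|N|<|G|$ and $|G/N|<|G|$, minimality of $|G|$ forces both to be \emph{proper} subgraphs of $\SRi{k}{1}{2}$. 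I would also record that $\SRi{k}{1}{2}$ satisfies P\'alfy's condition (Lemma \ref{PC}): given any three of its vertices, either two of them lie in a common complete subgraph among $A$, $B$, $C$, or at least one lies in $B$ and another in $C$, and in every case that pair spans an edge. Finally, $\SRi{k}{1}{2}$ has $2k+3\geq 9$ vertices when $k\geq 3$, so the size hypothesis of Lemma \ref{lemma3} is met.

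Next I would produce the four vertices demanded by Lemma \ref{lemma3}. Take the admissible vertex $c=b_2$ (strongly admissible, hence admissible, by Lemma \ref{admissiblevertices}) and $a=a_2$, which is adjacent to $b_2$ by construction. Let $b=a_1$; since $n=1$, the only neighbors of $a_1$ in $\rho(B)$ are $b_1$ and $b_{k+1}$, so $a_1$ is not adjacent to $c=b_2$. Finally take the admissible vertex $d=b_3$, which exists because $k\geq 3$; again using $n=1$, the unique neighbor of $a_2$ in $\rho(B)$ is $b_2$, so $a_2$ is not adjacent to $d=b_3$. The vertices $a=a_2$ and $b=a_1$ are distinct, so every graph-theoretic hypothesis of Lemma \ref{lemma3} is verified uniformly for all $k\geq 3$.

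To finish, I would invoke the group-theoretic input already assembled: Lemmas \ref{admissiblevertices} and \ref{pivertices} together show that $G$ has no normal nonabelian Sylow $q$-subgroup for any $q\in\rho(G)$; the discussion preceding Lemma \ref{minimalnormal} gives $\Phi(G)=1$ and a subgroup $H$ with $G=HF$ and $H\cap F=1$; and Lemma \ref{minimalnormal} establishes that the Fitting subgroup $F$ is minimal normal in $G$. With all hypotheses of Lemma \ref{lemma3} in place, that lemma yields that $\SRi{k}{1}{2}$ is not the prime character degree graph of any solvable group, contradicting the assumption $\SRi{k}{1}{2}=\Delta(G)$ and thereby proving the statement.

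The only place requiring genuine care is the bookkeeping in the second paragraph: one must check simultaneously, and uniformly in $k\geq 3$, that the chosen $a,b,c,d$ satisfy all of the adjacency, non-adjacency, and admissibility requirements appearing in the hypothesis of Lemma \ref{lemma3}, and one should make sure that the ``proper subgraph'' conditions on $\Delta(N)$ and $\Delta(G/N)$ really do follow from minimality of $|G|$ rather than needing an independent argument. Everything else is a direct appeal to results proved earlier in this section.
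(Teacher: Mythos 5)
Your proposal is correct and follows essentially the same route as the paper: assemble the conclusions of Lemmas \ref{admissiblevertices}--\ref{minimalnormal} and apply Lemma \ref{lemma3}. The only difference is cosmetic — the paper takes $a=a_1$, $b=a_2$, $c=b_1$, $d=c_1$ where you take $a=a_2$, $b=a_1$, $c=b_2$, $d=b_3$ — and both choices satisfy the adjacency, non-adjacency, and admissibility requirements for all $k\geq3$, with your added verifications of P\'alfy's condition, the vertex count $2k+3\geq5$, and the proper-subgraph conditions being correct and in fact slightly more explicit than the paper's.
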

\begin{proof}
For the sake of contradiction, suppose that $G$ is a counterexample with $|G|$ minimal such that $\Delta(G)=\SRi{k}{1}{2}$. Observe that the Fitting subgroup of $G$ is minimal normal in $G$ by Lemma \ref{minimalnormal}. Next, notice that the conditions from Lemma \ref{lemma3} are satisfied by taking $a=a_1$, $b=a_2$, $c=b_1$, and $d=c_1$, where we observe that $b_1$ and $c_1$ are admissible by Lemma \ref{admissiblevertices}. Applying Lemma \ref{lemma3} yields our contradiction, and therefore the graph $\SRi{k}{1}{2}$ is not the prime character degree graph of any solvable group.
\end{proof}

\subsection{Proof of Theorem 1.1.}\label{ISsubsection}

Next, we assume our inductive hypothesis below, where we recall that we always take $k \geq \max{\{t,3\}}$.

\begin{hyp}\label{mainhyp}
Given any integer $t\geq1$, we assume that the graph $\SR{k}{t}$ does not occur as the prime character degree graph of any solvable group.
\end{hyp}

In order to tame the graph $\SR{k}{t+1}$, which is the goal, there is one particular subgraph that must be handled first: $\SRi{k}{t}{2}$. However, this is a rather tedious task, as we must first begin with the family of graphs classified in \cite{BL}. In particular, as consequence of Theorem \ref{KT} (using the graphs $\Gamma_{k+t+1,k}$ and $\Gamma_{k+t+2,k}$), the graph $\SRi{k}{1}{(t+1)}$ does not occur as $\Delta(G)$ for any solvable group $G$. The argument follows identically to what was done in the series of lemmas for $\SRi{k}{1}{2}$ (that is, Lemmas \ref{admissiblevertices} through \ref{finalpart}). Next, since the graph $\SRi{k}{1}{(t+1)}$ does not occur, this then yields the non-occurrence of $\SRi{k}{2}{t}$. Furthermore, this implies $\SRi{k}{3}{(t-1)}$ does not occur, and continuing in this way, we get to the non-occurrence of $\SRi{k}{t-1}{3}$. Using the aforementioned graph, along with the non-occurrence of $\SR{k}{t}$ (from Hypothesis \ref{mainhyp}), we can ultimately conclude that the graph $\SRi{k}{t}{2}$ does not occur as $\Delta(G)$ for any solvable group $G$. All these arguments again follow what was done in Subsection \ref{BCsubsection}. See Figure \ref{figchain} for the chain of implications leading to the non-occurrence of $\SRi{k}{t}{2}$.
\begin{figure}[htb]
    \centering
$\begin{matrix}
\Gamma_{k+t+1,k} & & & &\\
& \implies & \SRi{k}{1}{(t+1)} \implies \SRi{k}{2}{t} \implies \SRi{k}{3}{(t-1)} \implies \cdots \implies & \SRi{k}{t-1}{3} & &\\
\Gamma_{k+t+2,k} & & & & \implies & \SRi{k}{t}{2}\\
& & & \SR{k}{t} & &\\
\end{matrix}$
    \caption{Sequence of non-occurrence implications}
    \label{figchain}
\end{figure}

Finally, we can then proceed to the inductive step:

\begin{proposition}\label{mainstep}
The graph $\SR{k}{t+1}$ does not occur as the prime character degree graph of any solvable group.
\end{proposition}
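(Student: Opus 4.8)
The plan is to reproduce, with the evident index shifts, the architecture of Proposition \ref{n1case} and of the chain of lemmas treating $\{\SRi{k}{1}{2}\}$ (Lemmas \ref{admissiblevertices}--\ref{finalpart}), with the roles formerly played by $\Gamma_{k+1,k}$ and $\Gamma_{k+2,k}$ now taken over by $\SR{k}{t}$ and $\SRi{k}{t}{2}$, whose non-occurrence is supplied by Hypothesis \ref{mainhyp} and by the chain of implications in Figure \ref{figchain}, respectively. So I would begin by assuming, for contradiction, that $G$ is a solvable group of minimal order with $\Delta(G)=\SR{k}{t+1}$, where as always $k\geq\max\{t+1,3\}$.

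The principal task is to show that every $b_i$ with $1\leq i\leq k+t+1$ is strongly admissible. Up to automorphisms of $\SR{k}{t+1}$ the vertices $b_i$ form at most two orbits, according to whether the unique $A$-neighbor of $b_i$ is a ``doubled'' vertex $a_j$ (one with $j\leq t+1$, hence with two neighbors in $B$) or an ``undoubled'' one (with $t+2\leq j\leq k$), so it is enough to argue for one representative of each. Deleting a doubled-side $b_i$ collapses the graph, after relabelling, to $\SR{k}{t}$, which is non-occurring by Hypothesis \ref{mainhyp}; deleting an undoubled $b_j$ yields a graph of diameter three ($a_j$ being then at distance three from $c$), which the results of \cite{Sass} exclude. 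This settles condition (i). For condition (ii): deleting the matching edge at a doubled-side $b_i$ turns $\SR{k}{t+1}$ into $\SRi{k}{t}{2}$, non-occurring by the work immediately preceding this proposition; deleting the matching edge at an undoubled $b_j$ again produces a diameter-three graph; and deleting any other edge incident to $b_i$, possibly together with still further edges at $b_i$, leaves an independent triple (of the shape $\{a_\ell,c,b_i\}$ or $\{a_\ell,b_i,b_m\}$) and so violates Lemma \ref{PC}. Condition (iii) proceeds exactly as in Lemma \ref{admissiblevertices}: the vertices mutually adjacent to a fixed $b_i$ all lie in $\{c\}\cup B$, and deleting an edge between any two of them produces an independent triple with two members in $\{c\}\cup B$ and the third in $A$, again contradicting Lemma \ref{PC}. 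By Lemma \ref{strong} and the minimality of $|G|$, $G$ then has no normal nonabelian Sylow $b_i$-subgroup.

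Next I would deal with the remaining vertices. For each $a_j$ I would verify the hypotheses of Lemma \ref{pi} with $q=a_j$: writing $\pi$ for the set of neighbors of $a_j$, put $\pi_1=\{b_j\}$ (adjoining $b_{k+j}$ when $j\leq t+1$) and $\pi_2=A\setminus\{a_j\}$, between which no edges run; then take $v=a_i\in\pi_2$ for any $i\neq j$, which is adjacent to the admissible vertex $s=b_i\in\rho$, and $w=c\in\rho$, which is not adjacent to $v$. For $q=c$ I would check that $\SR{k}{t+1}$ satisfies the technical hypothesis of \cite{BLL} just as $\SR{k}{1}$ does in Proposition \ref{n1case}. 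Together with the previous paragraph this gives that $G$ has no normal nonabelian Sylow $q$-subgroup for any $q\in\rho(G)$. Arguing as in the discussion preceding Lemma \ref{minimalnormal}, I would then obtain $\Phi(G)=1$: the analogue of Lemma \ref{subgraphpart} (taking $p=c$, $\pi=B$, $\rho=A$, and using the same reductions) shows that no proper connected subgraph of $\SR{k}{t+1}$ on the full vertex set occurs, while the disconnected subgraph on the full vertex set --- two complete components of sizes $k$ and $k+t+2$ --- is ruled out by P\'alfy's inequality from \cite{P2} (or, in the single boundary case $k=3$, $t=2$, by Theorem 5.5 of \cite{L2}); minimality then forces $\Phi(G)=1$, and Lemma III~4.4 of \cite{H} supplies a complement $H$ with $G=HF$ and $H\cap F=1$, where $F$ is the Fitting subgroup of $G$. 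Running the case analysis of Lemma \ref{minimalnormal} over the maximal cliques of $\SR{k}{t+1}$ --- namely $\{c\}\cup B$, the triangles $\{a_i,b_i,b_{k+i}\}$ for $1\leq i\leq t+1$, the edges $\{a_j,b_j\}$ for $t+2\leq j\leq k$, and $A$ --- and using the admissibility of the $b_i$ to contradict $O^b(G)<G$ in each branch, shows that $F$ is minimal normal in $G$. Lemma \ref{lemma3} then applies with (in its notation) $a=a_1$, $b=a_2$, $c=b_1$, $d=b_2$ --- since $a_1\sim b_1$, $a_2\not\sim b_1$, $a_1\not\sim b_2$, and $b_1,b_2$ are admissible --- which gives the required contradiction and completes the induction.

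I expect the crux to be the strong-admissibility bookkeeping of the second step, where each deleted vertex or edge must be matched to precisely the right tool; the decisive point --- and the very reason the family $\{\SRi{k}{n}{m}\}$ had to be introduced --- is that the subgraph produced by deleting a matching edge at a doubled-side $b_i$ is exactly $\SRi{k}{t}{2}$, so Proposition \ref{mainstep} is unreachable without first disposing of that graph through the chain in Figure \ref{figchain}. A smaller but genuine wrinkle is the lone boundary case $\SR{3}{3}$, in which the disconnected subgraph has components of sizes $3$ and $7=2^3-1$ and P\'alfy's inequality is no longer strict, so there one must instead appeal to the finer classification of disconnected graphs in \cite{L2}.
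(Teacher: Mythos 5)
Your proposal is correct and follows essentially the same route as the paper's (much terser) proof: the same two orbits of $b$-vertices, with deletion of a doubled-side $b_i$ giving $\SR{k}{t}$ and deletion of its matching edge giving $\SRi{k}{t}{2}$, diameter-three arguments for the undoubled $b_j$, Lemma \ref{pi} for the $a_j$, the \cite{BLL} hypothesis for $c$, and the $\Phi(G)=1$ / minimal-normal-Fitting / Lemma \ref{lemma3} endgame. The one small slip is the assertion that all mutually adjacent neighbor pairs of a doubled-side $b_i$ lie in $\{c\}\cup B$ --- the pair $(a_i,b_{k+i})$ is also such a pair --- but this case is exactly the diameter-three configuration treated in Lemma \ref{admissiblevertices}, to which you defer, so nothing is lost.
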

\begin{proof}
We proceed by contradiction where we suppose there exists some solvable group $G$ with $|G|$ minimal such that $\Delta(G)=\SR{k}{t+1}$. One can verify that $b_i$ is strongly admissible for all $1\leq i\leq k+t+1$. In particular, $b_1,\ldots,b_{t+1},b_{k+1},\ldots,b_{k+t+1}$ all behave identically and rely on the graphs $\SR{k}{t}$ (Hypothesis \ref{mainhyp}) and $\SRi{k}{t}{2}$ (Figure \ref{figchain}), and $b_{t+2},\ldots,b_k$ are symmetrical and rely on diameter three arguments outlined in \cite{Sass}. Next, $a_1,\ldots,a_k$ all easily satisfy Lemma \ref{pi}. Finally, $\SR{k}{t+1}$ satisfies the hypothesis from \cite{BLL} under the notation corresponding to $p=c$ (which relies on the graph $\Gamma_{k+t+1,k}$ and the graphs $\SRi{k}{t+1-i}{(i+1)}$ for all $1\leq i\leq t$). Hence, there is no normal nonabelian Sylow $q$-subgroup for any vertex $q\in\rho(G)$.

As before, one can go through steps to get the Frattini subgroup $\Phi(G)=1$. Then one gets the existence of a subgroup $H$ of $G$ such that $G=HF$ and $H\cap F=1$, where $F$ is the Fitting subgroup of $G$. It is then an easy verification that $F$ is minimal normal in $G$, and then that $\SR{k}{t+1}$ satisfies the conditions of Lemma \ref{lemma3}, obtaining our contradiction.
\end{proof}

For examples of the next family of graphs handled by Proposition \ref{mainstep} (namely, the case for $n=2$), see Figure \ref{figk2R}.

\begin{figure}[htb]
    \centering
$
\begin{tikzpicture}[scale=2]
\node (1a) at (.5,1) {$a_1$};
\node (2a) at (.5,0) {$a_2$};
\node (11a) at (1.75,1) {$b_1$};
\node (22a) at (1.75,0) {$b_2$};
\node (33a) at (1.25,.75) {$b_3$};
\node (44a) at (1.25,.25) {$b_4$};
\node (55a) at (2.25,.5) {$c$};
\path[font=\small,>=angle 90]
(1a) edge node [right] {$ $} (2a)
(11a) edge node [right] {$ $} (22a)
(11a) edge node [right] {$ $} (33a)
(11a) edge node [right] {$ $} (44a)
(11a) edge node [right] {$ $} (55a)
(22a) edge node [right] {$ $} (33a)
(22a) edge node [right] {$ $} (44a)
(22a) edge node [right] {$ $} (55a)
(33a) edge node [right] {$ $} (44a)
(33a) edge node [right] {$ $} (55a)
(44a) edge node [right] {$ $} (55a)
(1a) edge node [right] {$ $} (11a)
(2a) edge node [right] {$ $} (22a)
(1a) edge node [right] {$ $} (33a)
(2a) edge node [right] {$ $} (44a);
\node (1b) at (3,1) {$a_1$};
\node (2b) at (3,0) {$a_2$};
\node (3b) at (3.5,.5) {$a_3$};
\node (11b) at (4.75,1) {$b_1$};
\node (22b) at (4.75,0) {$b_2$};
\node (33b) at (4.25,.5) {$b_3$};
\node (44b) at (5.25,.85) {$b_4$};
\node (55b) at (5.25,.15) {$b_5$};
\node (66b) at (5.75,.5) {$c$};
\path[font=\small,>=angle 90]
(1b) edge node [right] {$ $} (2b)
(1b) edge node [right] {$ $} (3b)
(2b) edge node [right] {$ $} (3b)
(11b) edge node [right] {$ $} (22b)
(11b) edge node [right] {$ $} (33b)
(11b) edge node [right] {$ $} (44b)
(11b) edge node [right] {$ $} (55b)
(11b) edge node [right] {$ $} (66b)
(22b) edge node [right] {$ $} (33b)
(22b) edge node [right] {$ $} (33b)
(22b) edge node [right] {$ $} (44b)
(22b) edge node [right] {$ $} (55b)
(22b) edge node [right] {$ $} (66b)
(33b) edge node [right] {$ $} (44b)
(33b) edge node [right] {$ $} (55b)
(33b) edge node [right] {$ $} (66b)
(44b) edge node [right] {$ $} (55b)
(44b) edge node [right] {$ $} (66b)
(55b) edge node [right] {$ $} (66b)
(1b) edge node [right] {$ $} (11b)
(2b) edge node [right] {$ $} (22b)
(3b) edge node [right] {$ $} (33b)
(1b) edge node [right] {$ $} (44b)
(2b) edge node [right] {$ $} (55b);
\node (1c) at (-.5,-.5) {$a_1$};
\node (2c) at (-.5,-1.5) {$a_2$};
\node (3c) at (0,-.75) {$a_3$};
\node (4c) at (0,-1.25) {$a_4$};
\node (11c) at (1.25,-.5) {$b_1$};
\node (22c) at (1.25,-1.5) {$b_2$};
\node (33c) at (.75,-.75) {$b_3$};
\node (44c) at (.75,-1.25) {$b_4$};
\node (55c) at (1.75,-.65) {$b_5$};
\node (66c) at (1.75,-1.35) {$b_6$};
\node (77c) at (2.25,-1) {$c$};
\path[font=\small,>=angle 90]
(1c) edge node [right] {$ $} (2c)
(1c) edge node [right] {$ $} (3c)
(1c) edge node [right] {$ $} (4c)
(2c) edge node [right] {$ $} (3c)
(2c) edge node [right] {$ $} (4c)
(3c) edge node [right] {$ $} (4c)
(11c) edge node [right] {$ $} (22c)
(11c) edge node [right] {$ $} (33c)
(11c) edge node [right] {$ $} (44c)
(11c) edge node [right] {$ $} (55c)
(11c) edge node [right] {$ $} (66c)
(11c) edge node [right] {$ $} (77c)
(22c) edge node [right] {$ $} (33c)
(22c) edge node [right] {$ $} (33c)
(22c) edge node [right] {$ $} (44c)
(22c) edge node [right] {$ $} (55c)
(22c) edge node [right] {$ $} (66c)
(22c) edge node [right] {$ $} (77c)
(33c) edge node [right] {$ $} (44c)
(33c) edge node [right] {$ $} (55c)
(33c) edge node [right] {$ $} (66c)
(33c) edge node [right] {$ $} (77c)
(44c) edge node [right] {$ $} (55c)
(44c) edge node [right] {$ $} (66c)
(44c) edge node [right] {$ $} (77c)
(55c) edge node [right] {$ $} (66c)
(55c) edge node [right] {$ $} (77c)
(66c) edge node [right] {$ $} (77c)
(1c) edge node [right] {$ $} (11c)
(2c) edge node [right] {$ $} (22c)
(3c) edge node [right] {$ $} (33c)
(4c) edge node [right] {$ $} (44c)
(1c) edge node [right] {$ $} (55c)
(2c) edge node [right] {$ $} (66c);
\node (1d) at (3.5,-.5) {$a_1$};
\node (2d) at (3.5,-1.5) {$a_2$};
\node (3d) at (4,-.75) {$a_3$};
\node (4d) at (4,-1.25) {$a_4$};
\node (5d) at (3,-1) {$a_5$};
\node (11d) at (5.75,-.5) {$b_1$};
\node (22d) at (5.75,-1.5) {$b_2$};
\node (33d) at (5.25,-.65) {$b_3$};
\node (44d) at (5.25,-1.35) {$b_4$};
\node (55d) at (4.75,-1) {$b_5$};
\node (66d) at (6.25,-.65) {$b_6$};
\node (77d) at (6.25,-1.35) {$b_7$};
\node (88d) at (6.75,-1) {$c$};
\path[font=\small,>=angle 90]
(1d) edge node [right] {$ $} (2d)
(1d) edge node [right] {$ $} (3d)
(1d) edge node [right] {$ $} (4d)
(1d) edge node [right] {$ $} (5d)
(2d) edge node [right] {$ $} (3d)
(2d) edge node [right] {$ $} (4d)
(2d) edge node [right] {$ $} (5d)
(3d) edge node [right] {$ $} (4d)
(3d) edge node [right] {$ $} (5d)
(4d) edge node [right] {$ $} (5d)
(11d) edge node [right] {$ $} (22d)
(11d) edge node [right] {$ $} (33d)
(11d) edge node [right] {$ $} (44d)
(11d) edge node [right] {$ $} (55d)
(11d) edge node [right] {$ $} (66d)
(11d) edge node [right] {$ $} (77d)
(11d) edge node [right] {$ $} (88d)
(22d) edge node [right] {$ $} (33d)
(22d) edge node [right] {$ $} (44d)
(22d) edge node [right] {$ $} (55d)
(22d) edge node [right] {$ $} (66d)
(22d) edge node [right] {$ $} (77d)
(22d) edge node [right] {$ $} (88d)
(33d) edge node [right] {$ $} (44d)
(33d) edge node [right] {$ $} (55d)
(33d) edge node [right] {$ $} (66d)
(33d) edge node [right] {$ $} (77d)
(33d) edge node [right] {$ $} (88d)
(44d) edge node [right] {$ $} (55d)
(44d) edge node [right] {$ $} (66d)
(44d) edge node [right] {$ $} (77d)
(44d) edge node [right] {$ $} (88d)
(55d) edge node [right] {$ $} (66d)
(55d) edge node [right] {$ $} (77d)
(55d) edge node [right] {$ $} (88d)
(66d) edge node [right] {$ $} (77d)
(66d) edge node [right] {$ $} (88d)
(77d) edge node [right] {$ $} (88d)
(1d) edge node [right] {$ $} (11d)
(2d) edge node [right] {$ $} (22d)
(3d) edge node [right] {$ $} (33d)
(4d) edge node [right] {$ $} (44d)
(5d) edge node [right] {$ $} (55d)
(1d) edge node [right] {$ $} (66d)
(2d) edge node [right] {$ $} (77d);
\end{tikzpicture}
$
    \caption{Examples of graphs in the family $\{\SR{k}{2}\}$: $2\leq k\leq5$}
    \label{figk2R}
\end{figure}

\begin{remark}
As a singular point of interest, we note that we are able to tame the graph $\SR{3}{3}$ (see Figure \ref{fig33R})
. The disconnected subgraph with the same vertex set has components of size $a=3$ and $b=7$, in which case $b=7=2^3-1=2^a-1$; this does not violate P\'alfy's inequality from \cite{P2}, but does attain equality. In particular, this graph satisfies the technical hypothesis from \cite{BLL} (which is sufficient for our argument), in contrast to how the unknown graphs in Figure \ref{figRmaybe} do not. All other graphs $\SR{k}{n}$ with $k\geq3$ and $(k,n)\neq(3,3)$ easily violate P\'alfy's inequality.
\end{remark}

\begin{figure}[htb]
    \centering
$
\begin{tikzpicture}[scale=2]
\node (1a) at (.5,.5) {$a_1$};
\node (2a) at (0,1) {$a_2$};
\node (3a) at (0,0) {$a_3$};
\node (11a) at (1.25,.75) {$b_1$};
\node (22a) at (1.75,1) {$b_2$};
\node (33a) at (1.75,0) {$b_3$};
\node (44a) at (1.25,.25) {$b_4$};
\node (55a) at (2.25,.85) {$b_5$};
\node (66a) at (2.25,.15) {$b_6$};
\node (77a) at (2.75,.5) {$c$};
\path[font=\small,>=angle 90]
(1a) edge node [right] {$ $} (2a)
(1a) edge node [right] {$ $} (3a)
(2a) edge node [right] {$ $} (3a)
(11a) edge node [right] {$ $} (22a)
(11a) edge node [right] {$ $} (33a)
(11a) edge node [right] {$ $} (44a)
(11a) edge node [right] {$ $} (55a)
(11a) edge node [right] {$ $} (66a)
(11a) edge node [right] {$ $} (77a)
(22a) edge node [right] {$ $} (33a)
(22a) edge node [right] {$ $} (44a)
(22a) edge node [right] {$ $} (55a)
(22a) edge node [right] {$ $} (66a)
(22a) edge node [right] {$ $} (77a)
(33a) edge node [right] {$ $} (44a)
(33a) edge node [right] {$ $} (55a)
(33a) edge node [right] {$ $} (66a)
(33a) edge node [right] {$ $} (77a)
(44a) edge node [right] {$ $} (55a)
(44a) edge node [right] {$ $} (66a)
(44a) edge node [right] {$ $} (77a)
(55a) edge node [right] {$ $} (66a)
(55a) edge node [right] {$ $} (77a)
(66a) edge node [right] {$ $} (77a)
(1a) edge node [right] {$ $} (11a)
(2a) edge node [right] {$ $} (22a)
(3a) edge node [right] {$ $} (33a)
(1a) edge node [right] {$ $} (44a)
(2a) edge node [right] {$ $} (55a)
(3a) edge node [right] {$ $} (66a);
\end{tikzpicture}
$
    \caption{The graph $\SR{3}{3}$}
    \label{fig33R}
\end{figure}

To conclude, we once again state our main theorem:

\begin{thmmain}
The graph $\SR{k}{n}$ occurs as the prime character degree graph of a solvable group when $(k,n)=(1,1)$ (see Figure \ref{fig11R}), and possibly when $(k,n)\in\{(2,1),(2,2)\}$ (see Figure \ref{figRmaybe}). Otherwise $\SR{k}{n}$ does not occur as the prime character degree graph of any solvable group.
\end{thmmain}
\begin{proof}
Follows by the induction argument given through Proposition \ref{n1case}, Hypothesis \ref{mainhyp}, and Proposition \ref{mainstep}.
\end{proof}

\begin{corollary}
For all integers $k$ and $n$ such that $1\leq n\leq k$ and $k\geq3$, any proper connected subgraph of $\SR{k}{n}$ with the same vertex set is not the prime character degree graph of any solvable group.
\end{corollary}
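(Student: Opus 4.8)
Here is my plan for proving the corollary.

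The plan is to combine P\'alfy's condition (Lemma \ref{PC}) with the classifications already established, reducing the problem to a short list of cases. Let $\Gamma$ be a proper connected subgraph of $\SR{k}{n}$ with the same vertex set, i.e.\ obtained by deleting a nonempty set of edges; recall that $\SR{k}{n}$ is the union of the cliques $A$ and $B\cup\{c\}$ together with the matching edges $\epsilon(a_i,b_i)$ ($1\le i\le k$) and $\epsilon(a_i,b_{k+i})$ ($1\le i\le n$), and that each vertex of $B$ is joined to exactly one vertex of $A$. \textbf{Step 1 (P\'alfy reduction).} First I would show that if $\Gamma$ is missing any edge inside $A$ or inside $B\cup\{c\}$, then $\Gamma$ violates Lemma \ref{PC}: using $k\ge3$, deleting $\epsilon(a_i,a_j)$ leaves the triple $\{a_i,a_j,b_\ell\}$ edgeless for any $\ell\le k$ with $\ell\notin\{i,j\}$, and deleting $\epsilon(b_i,b_j)$ or $\epsilon(c,b_i)$ leaves $\{b_i,b_j,a_\ell\}$ or $\{c,b_i,a_\ell\}$ edgeless once $a_\ell$ avoids the (at most two) vertices of $A$ matched to $b_i$ and $b_j$. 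So we may assume $\Gamma$ is obtained by deleting only matching edges, and since $A$ and $B\cup\{c\}$ become separated once all matching edges are gone, connectedness forces at least one matching edge to survive.

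\textbf{Step 2 (edges at a doubly matched vertex).} Next I would observe that deleting a matching edge incident to a doubly matched vertex --- namely $\epsilon(a_i,b_{k+i})$ with $i\le n$, or the symmetric $\epsilon(a_i,b_i)$ with $i\le n$ --- converts $\SR{k}{n}$ into an isomorphic copy of $\SRi{k}{n-1}{2}$, the freed partner together with $c$ becoming the size-two ``special'' clique, where $\SRi{k}{0}{2}$ is read as $\Gamma_{k+2,k}$. Hence any $\Gamma$ missing such an edge is a connected subgraph, on the full vertex set, of one of these graphs, all of which are non-occurring by Lemma \ref{finalpart} together with the chain in Figure \ref{figchain} (or by Theorem \ref{KT}). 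To finish this case I would check that the proofs of Lemmas \ref{admissiblevertices}--\ref{finalpart} and of their $\SRi{k}{t}{2}$ analogues actually rule out \emph{every} connected subgraph with the same vertex set --- precisely as the final clause of Theorem \ref{KT} does for $\Gamma_{k,t}$ --- which should be a routine induction on the number of deleted matching edges, with Lemma \ref{PC} again clearing away the bulk of the configurations.

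\textbf{Step 3 (the main obstacle).} The remaining case is when all deleted matching edges have the form $\epsilon(a_i,b_i)$ with $n<i\le k$; note this case is vacuous when $n=k$. Each such vertex $a_i$ now meets only the clique $A$, and the resulting graph is \emph{not} a subgraph, on the full vertex set, of any $\Gamma_{k',t'}$ or $\SRi{k}{m}{2}$ --- it is a \emph{supergraph} of $\Gamma_{k+n+1,k}$ with some matching edges removed --- so non-occurrence is not inherited and must be argued directly. Here I would rerun the argument of Propositions \ref{n1case} and \ref{mainstep} for this perturbed graph: the vertices $b_1,\dots,b_{k+n}$ and $c$ should remain strongly admissible (their vertex- and edge-deletions still land among graphs $\Gamma_{k',t'}$, or have diameter three and are handled by \cite{Sass}), and each $a_j$ should still satisfy Lemma \ref{pi}; then a minimal counterexample has no normal nonabelian Sylow subgroup, trivial Frattini subgroup and minimal-normal Fitting subgroup, and Lemma \ref{lemma3} finishes. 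Moreover, since such a graph has diameter three, Theorem~2 of \cite{Sass} should eliminate it outright for all but finitely many $(k,n)$, leaving only a few small graphs whose admissibility bookkeeping must be redone by hand --- and it is exactly this bookkeeping in Step 3 that I expect to be the only genuinely laborious part of the proof.
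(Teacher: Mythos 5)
Your plan follows the same route the paper's machinery implicitly provides: the corollary is stated without proof precisely because every non-occurrence argument in the paper (the admissibility computations, Lemma \ref{subgraphpart}, the final clause of Theorem \ref{KT}, and the diameter-three results of \cite{Sass}) already disposes of the relevant same-vertex-set subgraphs. Your Step 1 is correct and is exactly the P\'alfy reduction used throughout; your identification in Step 2 that deleting a matching edge at a doubly matched vertex produces a copy of $\SRi{k}{n-1}{2}$ (with $\SRi{k}{0}{2}$ read as $\Gamma_{k+2,k}$) is also correct and is precisely why the paper routes through the $\{\SRi{k}{n}{m}\}$ family; and your Step 3 case (deleting $\epsilon(a_i,b_i)$ for $i>n$) is the case the paper handles by observing the result has diameter three and invoking \cite{Sass} --- see, e.g., the treatment of $\SRi{k}{1}{2}[\epsilon(b_2,a_2)]$ in Lemma \ref{admissiblevertices}. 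So the ``genuinely laborious bookkeeping'' you anticipate in Step 3 is in fact already done inside the paper's admissibility lemmas, and no fresh run of Propositions \ref{n1case} or \ref{mainstep} is needed there.

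The one place where your outline is thinner than it should be is the closing clause of Step 2: reducing to a connected same-vertex-set subgraph of $\SRi{k}{n-1}{2}$ (or, after further deletions, of $\SRi{k}{n-j}{(j+1)}$) only finishes the argument if the non-occurrence results for that family are themselves closed under passing to proper connected subgraphs on the full vertex set. The paper never states this as a theorem; it is distributed across the strong-admissibility checks and Lemma \ref{subgraphpart}, and your proposal acknowledges but does not discharge it. A complete write-up would make this subgraph-closure explicit (an induction on the number of deleted matching edges, descending through the chain of Figure \ref{figchain} and terminating either in a $\Gamma_{k',k}$ handled by the final clause of Theorem \ref{KT} or in a diameter-three graph handled by \cite{Sass}). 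With that supplied, your argument is sound and is essentially the intended one.
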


\subsection{Final remarks.}

The family $\{\SLR{k}{n}\}$ was originally studied in \cite{LM} to provide examples of graphs that satisfy the technical hypothesis from \cite{BLL}. Another motivation was to build upon the construction of the family of graphs studied in \cite{BL}. As is often the case with classifications of this sort, use of admissibility presents a dependence on knowing the status of subgraphs. Therefore, if the yet-unclassified graphs with five and six vertices from \cite{Lewis} and \cite{BL2}, respectively, are determined to occur or not occur, we may be able to leverage this information to better understand the unknown graphs from Figure \ref{figRmaybe}. Finally, one can imagine obvious ways to generalize and expand upon our results in this paper.

\end{document}